\newcommand\myscale{0.8}
\newcommand\tcirc[3]{
	\ifthenelse{\equal{#1}{w}}{\filldraw[fill=white,draw=black] (#2) circle (0.075);}{}%
	\ifthenelse{\equal{#1}{b}}{\filldraw[black] (#2) circle (0.075);}{}%
	\draw (#2) ++(0,0.35) node {$#3$};
	}
\newcommand\tdots[1]{\draw (#1) ++(0.55,0) node {$\cdots$}}
\newcommand\bond[1]{\draw (#1) -- +(1,0)}
\newcommand\vbond[1]{\draw (#1) -- +(0,-1)}
\newcommand\diagbond[2]{
	\ifthenelse{\equal{#1}{u}}{
		\draw (#2) -- +(0.5,0.865);
	}{}
	\ifthenelse{\equal{#1}{d}}{
		\draw (#2) -- +(0.5,-0.865);
	}{}
	}
\newcommand\dbond[2]{
	\draw (#2) ++(0.03,0.03) -- +(0.94,0);
	\draw (#2) ++(0.03,-0.03) -- +(0.94,0);
	\ifthenelse{\equal{#1}{r}}{
		\draw[semithick] (#2) ++(0.6,0) ++(-0.15,0.2) -- ++(0.15,-0.2) -- +(-0.15,-0.2);
	}{}
	\ifthenelse{\equal{#1}{l}}{
		\draw[semithick] (#2) ++(0.45,0) ++(0.15,0.2) -- ++(-0.15,-0.2) -- +(0.15,-0.2);
	}{}
	}
\newcommand\tbond[2]{
	\draw (#2)  -- +(1,0);
	\draw (#2) ++(0.05,0.06) -- +(0.9,0);
	\draw (#2) ++(0.05,-0.06) -- +(0.9,0);
	\ifthenelse{\equal{#1}{r}}{
		\draw[semithick] (#2) ++(0.6,0) ++(-0.15,0.2) -- ++(0.15,-0.2) -- +(-0.15,-0.2);
	}{}
	\ifthenelse{\equal{#1}{l}}{
		\draw[semithick] (#2) ++(0.45,0) ++(0.15,0.2) -- ++(-0.15,-0.2) -- +(0.15,-0.2);
	}{}
	}
\newcommand\tcross[2]{
	\draw (#1) ++(0,0.35) node {$#2$};
	\draw[semithick] (#1) ++(-0.15,-0.15)-- +(0.3,0.3);
	\draw[semithick] (#1) ++(-0.15,0.15)-- +(0.3,-0.3);
	}
\newcommand\tsquare[2]{
		\draw[semithick,color=blue] (#1) ++(-0.15,-0.15) rectangle ++(0.3,0.3);
		\tcross{#1}{#2};
		}
\newcommand\tstar[2]{
	\draw[color=red] (#1) node {\Large$*$};
	\draw (#1) ++(0,0.35) node {$#2$};
	}
\newcommand\DDnode[3]{
\ifthenelse{\equal{#1}{w}}{\tcirc{w}{#2}{#3}}{}		
\ifthenelse{\equal{#1}{b}}{\tcirc{b}{#2}{#3}}{}		
\ifthenelse{\equal{#1}{x}}{\tcross{#2}{#3}}{}		
\ifthenelse{\equal{#1}{s}}{\tstar{#2}{#3}}{}		
\ifthenelse{\equal{#1}{q}}{\tsquare{#2}{#3}}{}		
}
\newcommand\Edd[2]{
 \begin{tiny}
 \begin{tikzpicture}[scale=\myscale,baseline=-3pt]
 \foreach \x in {0,1,2,3} {
	\bond{\x,0};
 }
 \vbond{2,0};
 
 \StrLen{#1}[\Ernk]
 
 \StrChar{#1}{1}[\nodetype];
 \DDnode{\nodetype}{0,0}{\StrBefore{#2}{,}};
 \StrChar{#1}{2}[\nodetype];
 \DDnode{\nodetype}{2,-1}{\StrBetween[1,2]{#2}{,}{,}};
 \StrChar{#1}{3}[\nodetype];
 \DDnode{\nodetype}{1,0}{\StrBetween[2,3]{#2}{,}{,}};
 \StrChar{#1}{4}[\nodetype];
 \DDnode{\nodetype}{2,0}{\StrBetween[3,4]{#2}{,}{,}};
 \StrChar{#1}{5}[\nodetype];
 \DDnode{\nodetype}{3,0}{\StrBetween[4,5]{#2}{,}{,}};
 \StrChar{#1}{6}[\nodetype];

 \ifthenelse{\equal{\Ernk}{6}}{
 		\DDnode{\nodetype}{4,0}{\StrBehind[5]{#2}{,}};
 		\useasboundingbox (-.4,-1.2) rectangle (4.4,0.55);
	}{}%
 
 \ifthenelse{\equal{\Ernk}{7}}{
 		\bond{4,0};
 		\DDnode{\nodetype}{4,0}{\StrBetween[5,6]{#2}{,}{,}};
		\StrChar{#1}{7}[\nodetype];
		\DDnode{\nodetype}{5,0}{\StrBehind[6]{#2}{,}};
 		\useasboundingbox (-.4,-1.2) rectangle (5.4,0.55);
	}{}%

 \ifthenelse{\equal{\Ernk}{8}}{
 		\bond{4,0};
 		\bond{5,0};
 		\DDnode{\nodetype}{4,0}{\StrBetween[5,6]{#2}{,}{,}};
		\StrChar{#1}{7}[\nodetype];
		\DDnode{\nodetype}{5,0}{\StrBetween[6,7]{#2}{,}{,}};
		\StrChar{#1}{8}[\nodetype];
		\DDnode{\nodetype}{6,0}{\StrBehind[7]{#2}{,}};
		\useasboundingbox (-.4,-1.2) rectangle (6.4,0.55);
	}{}%

 \end{tikzpicture}
 \end{tiny}
 }
 \newcommand\tad{\mathrm{ad}}
 \renewcommand\l{\left}
 \renewcommand\r{\right}
 \newcommand\bop{\bigoplus}
 \newcommand\op{\oplus}
 \newcommand\ot{\otimes}
 \newcommand\fann{\mathfrak{ann}}
 \newcommand\rnk{\mathrm{rank}}
 \newcommand\fa{{\mathfrak a}}
 \newcommand\ff{{\mathfrak f}}
 \newcommand\fg{{\mathfrak g}}
 \newcommand\fgl{\mathfrak{gl}}
 \newcommand\fh{{\mathfrak h}}
 \newcommand\fk{{\mathfrak k}}
 \newcommand\fp{{\mathfrak p}}
 \newcommand\fr{{\mathfrak r}}
 \newcommand\fs{{\mathfrak s}}
 \newcommand\fsl{\mathfrak{sl}}
 \newcommand\fso{\mathfrak{so}}
 \newcommand\fsp{\mathfrak{sp}}
 \newcommand\ft{{\mathfrak t}}
 \newcommand\fu{{\mathfrak u}}
 \newcommand\fz{{\mathfrak z}}
 \newcommand\fS{{\mathfrak S}}
 \newcommand\fU{{\mathfrak U}}
 \newcommand\cA{{\mathcal A}}
 \newcommand\bbC{{\mathbb C}}
 \newcommand\bbH{{\mathbb H}}
 \newcommand\bbP{{\mathbb P}}
 \newcommand\bbR{{\mathbb R}}
 \newcommand\bbS{{\mathbb S}}
 \newcommand\bbV{{\mathbb V}}
 \newcommand\bbW{{\mathbb W}}
 \newcommand\bbZ{{\mathbb Z}}
 \newcommand\tspan{\mathrm{span}}
 \newcommand\Ben{\begin{enumerate}}
 \newcommand\Een{\end{enumerate}}
 \newcommand\Bex{\begin{example}}
 \newcommand\Eex{\end{example}}
 \newcommand\ra{\rightarrow}
 \def\inj{\hookrightarrow}
 \newcommand\GL{\operatorname{GL}}
 \newcommand\CO{\mathrm{CO}}
 \newcommand\SO{\mathrm{SO}}
 \newcommand\SU{\text{SU}}
 \newcommand\Sp{\mathrm{Sp}}
  \newcommand\fco{\mathfrak{co}}
 \newcommand\fsu{\mathfrak{su}}
 \newcommand\Aut{\text{Aut}}
 \def\assoc/{associative}
 \def\arb/{arbitrary}
 \def\btw/{between}
 \def\coeff/{coefficient}
 \def\cohom/{cohomology}
 \def\coord/{coordinate}
 \def\coordsys/{coordinate system}
 \def\cpt/{compact}
 \def\cred/{completely reducible}
 \def\cts/{continuous}
 \def\dga/{differential-graded algebra}
 \def\dR/{de Rham}
 \def\Euc/{Euclidean} 
 \def\grp/{group}
 \def\hom/{homomorphism}
 \def\inv/{invariant}
 \def\iso/{isomorphism}
 \def\La/{Lie algebra}
 \def\Lag/{Lagrangian Grassmannian}
 \def\LG/{Lie group}
 \def\MA/{Monge--Amp\`ere}
 \def\MC/{Maurer--Cartan}
 \def\lintr/{linear transformation} 
 \def\mfld/{manifold}
 \def\nb/{normal bundle}
 \def\nbd/{neighbourhood}
 \def\nondeg/{non-degenerate}
 \def\posdef/{positive definite}
 \def\pu/{partition of unity}
 \def\rep/{representation}
 \def\Riem/{Riemannian}
 \def\sg/{subgroup}
 \def\ss/{semi-simple}
 \def\inv/{invariant}
 \def\irr/{irreducible}
 \def\Jacid/{Jacobi identity}
 \def\li/{linearly independent}
 \def\nd/{nowhere dependent}
 \def\nz/{nowhere zero}
 \def\on/{orthonormal}
 \def\onb/{\on/ basis}
 \def\orc/{\orth/ complement}
 \def\orth/{orthogonal}
 \def\orp/{\orth/ projection}
 \def\pde/{partial differential equation}
 \def\resp/{respectively}
 \def\seq/{sequence}
 \def\std/{standard}
 \def\SW/{Stiefel-Whitney}
 \def\uc/{universal cover}
 \def\vb/{vector bundle}
 \def\vf/{vector field}
 \def\vs/{vector space}
 \def\wrt/{with respect to}
 \renewcommand\mod{\,{\rm mod}\ }
 \newcommand\qbox[1]{\quad\mbox{#1}\quad}
 \renewcommand\dim{{\rm dim}}
\newtheorem{theorem}{Theorem}[section]
\newtheorem{prop}[theorem]{Proposition}
\theoremstyle{defn}
\newtheorem{defn}[theorem]{Definition}
\newtheorem{example}[theorem]{Example}
\newtheorem{recipe}[theorem]{Recipe}
\theoremstyle{remark}
\numberwithin{equation}{section}
 \newcommand\Weyl{\bbW}
 \newcommand\finf{\mathfrak{inf}}
 \newcommand\metric{{\rm g}}
 \newcommand\rkg{\ell}
 \newcommand\opn[1]{\operatorname{#1}}
 \renewcommand\myscale{0.8}
 \newcommand\yngtf[1]{\left. \yng(#1) \right._0}
\begin{document}

 \title[Maximally degenerate Weyl tensors in Riemannian and Lorentzian signatures]{Maximally degenerate Weyl tensors in Riemannian and Lorentzian signatures}

 \begin{abstract}
 We establish the submaximal symmetry dimension for Riemannian and Lorentzian conformal structures.  The proof is based on enumerating all subalgebras of orthogonal Lie algebras of sufficiently large dimension and verifying if they stabilize a non-zero Weyl tensor up to scale. Our main technical tools include Dynkin's classification of maximal subalgebras in complex simple Lie algebras, a theorem of Mostow, and Kostant's Bott--Borel--Weil theorem.
  \end{abstract}


 \author{Boris Doubrov}
 \address{Belarussian State University, Nezvisimosti Ave. 4, Minsk 220030, Belarus}
 \email{doubrov@islc.org}

 \author{Dennis The}
 \address{Mathematical Sciences Institute, Australian National University, ACT 0200, Australia}
 \email{dennis.the@anu.edu.au}

 \date{\today}
 \subjclass[2010]{Primary: 58J70; Secondary: 53A30, 22E46.}
 \keywords{Submaximal symmetry, conformal geometry, Weyl tensor}

 \maketitle

 \section{Introduction}
 
 It is well-known that any (connected) conformal manifold $(M^n,[\metric])$ of signature $(p,q)$ in dimension $n \geq 3$ has Lie algebra of (infinitesimal) conformal symmetries $\finf([\metric])$ with dimension no greater than $\dim(\fso(p+1,q+1)) = \binom{n+2}{2}$.  Indeed, equality is realized if and only if $(M,[\metric])$ is locally conformally flat, so a natural question is: {\em Among all (connected) conformal manifolds $(M^n,[\metric])$ which are {\bf not} locally conformally flat, what is the maximal dimension $\fS(n)$ of $\finf([\metric])$?}  This is referred to as the gap problem, and $\fS(n)$ is the submaximal symmetry dimension.

The fundamental local invariant of conformal structures is the Weyl tensor in dimensions $\ge 4$, or
the Cotton--York tensor in dimension 3.   (In dimensions 1 and 2, any metric is locally conformally flat, and does not possess any local invariants).  The conformal structure is locally flat if and only if this tensor vanishes identically. 
The gap problem for conformal structures in arbitrary signature is directly related to the representation theory of real pseudo-orthogonal groups, or more precisely, to the study of orbits of minimal dimension in the space of (algebraic) Weyl (Cotton--York) tensors.
 
 B.~Kruglikov and D.~The recently studied the gap problem in the general context of parabolic geometries, and gave a universal upper bound $\fS \leq \fU$, where $\fU$ is algebraically determined \cite[Thm.~4.2.5]{KT2013}.  For conformal geometries when $n \geq 4$, we have (by ``prolongation-rigidity'' \cite{KT2013}):
 \[
 \fU(n) = n + \max\{ \dim(\fann(\phi)) \mid 0 \neq \phi \in \Weyl \},
 \]
 where $\Weyl$ is the space of (algebraic) Weyl tensors (as $(3,1)$-tensors), which is a representation of the conformal group $G_0 = \CO(p,q)$, and $\fann(\phi) \subset \fg_0 = \fco(p,q) = \bbR \times \fso(p,q)$ is the annihilator of a {\em nonzero} element $\phi \in \Weyl$.  In all non-Riemannian and non-Lorentzian signatures, null 2-planes exist (in the standard $\fg_0$-representation) and are responsible for the equality $\fS(n) = \fU(n) = \binom{n-1}{2} + 6$.   For $n=3$, defining $\fU(3)$ analogously via Cotton--York tensors, we have $\fS(3) = \fU(3) = 4$ in the Riemannian case, and $4 = \fS(3) < \fU(3) = 5$ in the Lorentzian case.  In this article, we settle the gap problem for Riemannian and Lorentzian conformal structures when $n \geq 4$.  We will prove:
 
 \begin{theorem} \label{T:main-thm} Let $n \geq 4$.  For Riemannian and Lorentzian conformal structures, $\fS(n) = \fU(n)$, with
 \[
 \begin{array}{c|c|c}
 & \mbox{Riemannian} & \mbox{Lorentzian}\\ \hline
 \fS(n) & \l\{ \begin{array}{cl} \binom{n-1}{2} + 3, & n = 5 \mbox{ or } n \geq 7;\\[0.02in] \frac{n^2}{4} + n, & n=4 \mbox{ or } 6\end{array}\r. & \binom{n-1}{2} + 4
 \end{array}
 \]
 \end{theorem}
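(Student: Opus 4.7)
The plan is to establish $\fS(n) = \fU(n)$ in three movements: first invoke the upper bound $\fS(n) \leq \fU(n)$ from \cite[Thm.~4.2.5]{KT2013}, then compute $\fU(n)$ in closed form matching the tabulated values, and finally exhibit non-conformally-flat models realizing $\fS(n) \geq \fU(n)$. The first is given, so the substantive work is the representation-theoretic evaluation of $\fU(n)$ and the geometric realization.

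For the evaluation, I would maximize $\dim \fann(\phi)$ over nonzero algebraic Weyl tensors $\phi$ by enumerating the possible stabilizers $\fann(\phi) \subset \fg_0 = \fco(p,q) = \bbR \oplus \fso(p,q)$. Any such $\fann(\phi)$ is algebraic and is contained in a maximal subalgebra. Mostow's theorem on reductive stabilizers in rational representations lets us reduce, up to controlled dimension count, to reductive candidates in the Riemannian case; in the Lorentzian case one additionally treats the non-reductive stabilizers, the salient one being the parabolic $\fp \subset \fso(1,n-1)$ preserving a null line, whose unipotent radical acts nontrivially on $\Weyl$. After complexifying, I would apply Dynkin's classification of maximal subalgebras of $\fso(n,\bbC)$ to enumerate the candidates of sufficiently large dimension, and for each candidate $\fh$ reduce the question ``does $\fh$ annihilate a nonzero Weyl tensor up to scale?'' to nontriviality of the invariant subspace $\Weyl^\fh$. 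Kostant's Bott--Borel--Weil theorem applied to the $|1|$-graded parabolic geometry data for conformal structures identifies $\Weyl$ as an explicit irreducible $\fso(p,q)$-module, rendering the branching $\Weyl \downarrow \fh$ a finite highest-weight calculation; passing to real forms and reassembling the central $\bbR$-summand of $\fco(p,q)$ then outputs the candidate values of $n + \dim \fann(\phi)$. In the Riemannian case the dominant candidates are the 2-plane isotropy $\fso(n-2) \oplus \fso(2)$, contributing $\binom{n-1}{2}+3$ after adjoining scaling, and (when $n$ is even) $\fu(n/2)$, contributing $n^2/4 + n$; a direct numerical comparison shows the second dominates only for $n = 4, 6$. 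In the Lorentzian case the parabolic branch contributes an extra unit, yielding $\binom{n-1}{2}+4$.

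For realization, I would supply explicit models. In Riemannian $n = 4, 6$ the Fubini--Study metric on $\bbC P^{n/2}$ is non-conformally-flat with isometry algebra $\fsu(n/2+1)$ of dimension $n^2/4 + n$. In Riemannian $n = 5$ or $n \geq 7$, a cohomogeneity-one conformal structure with $\SO(n-1)$-isometry (for instance, a suitable conformal rescaling of a product $\bbR \times S^{n-1}$ of non-constant warp) meets the bound $\binom{n-1}{2}+3$. In Lorentzian signature, an appropriate pp-wave metric with transverse rotational and scaling symmetries realizes $\binom{n-1}{2}+4$. In each case one must verify that the Weyl tensor does not vanish identically, so the conformal structure is not the locally flat model.

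The chief technical obstacle is the representation-theoretic step: one must control the BBW branching uniformly across Dynkin's list, manage all real forms of each candidate, and ensure that no larger subalgebra has been overlooked. The bookkeeping is particularly delicate around the Lorentzian parabolic, where unipotent radicals act on $\Weyl$ with nontrivial kernels and an ad hoc comparison with the reductive candidates is needed to pin down the maximum. The realization step is comparatively concrete once the target symmetry algebra is identified, with the principal verification being the non-vanishing of the Weyl tensor of the model.
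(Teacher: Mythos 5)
Your upper-bound plan is essentially the paper's: $\fS\leq\fU$ from Kruglikov--The, complexification plus Dynkin's classification to enumerate large candidate subalgebras, Mostow's theorem for the non-reductive Lorentzian candidates inside $\fp_1\subset\fso(1,n-1)$, and branching/Kostant's theorem to decide whether a candidate preserves a nonzero Weyl tensor up to scale; the dominant candidates you name ($\fso(2)\oplus\fso(n-2)$, $\fu(n/2)$, and the distinguished subalgebra of the parabolic) and the resulting values agree with the paper. Two caveats on that side: in the Riemannian case the reduction to reductive subalgebras is automatic because every subalgebra of the compact algebra $\fso(n)$ is reductive (Mostow's theorems enter elsewhere: conjugacy into Levi factors, and the parabolic/centralizer dichotomy in the Lorentzian case); and for Lorentzian $n=6$ ``managing real forms'' is the actual crux, not bookkeeping --- one must prove that $\fgl(3,\bbC)\subset\fso(6,\bbC)$ and $\fsl(3,\bbC)\subset\fso(6,\bbC)$ admit no real form inside $\fso(1,5)$, which is a separate argument (the paper devotes Section 5 to it) and is exactly what keeps the Lorentzian answer at $\binom{n-1}{2}+4$ rather than $\frac{n^2}{4}+n$ at $n=6$.

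The genuine gap is in your realization step for Riemannian $n=5$ or $n\geq 7$. A conformal rescaling of the product metric on $\bbR\times S^{n-1}$ does not change the conformal class, and that class (the cylinder) is locally conformally flat; likewise any warped product $dt^2+f(t)^2\,\metric_{S^{n-1}}$ is locally conformally flat, since substituting $s=\int dt/f(t)$ exhibits it as conformal to the cylinder. So the model you propose has identically vanishing Weyl tensor and cannot witness $\fS(n)\geq\binom{n-1}{2}+3$; its symmetry count is also inconsistent, since $\SO(n-1)$ has dimension $\binom{n-1}{2}$ and the source of the extra $3$ is never identified. The lower bound requires a genuinely non-conformally-flat model: the paper takes the product of round spheres $S^2\times S^{n-2}$, whose conformal symmetry algebra is the isometry algebra $\fso(3)\oplus\fso(n-1)$ of dimension $3+\binom{n-1}{2}$. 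Without such a model your argument only yields $\fS(n)\leq\fU(n)=\binom{n-1}{2}+3$ for these $n$, not the claimed equality. (Your Lorentzian realization via a pp-wave is in the right direction --- the paper uses $\metric_{\rm pp}^{(3,1)}+\metric_{\rm euc}^{(n-4,0)}$ --- but it too needs the explicit count of $\binom{n-1}{2}+4$ conformal symmetries and the nonvanishing of the Weyl tensor, which you defer.)
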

 
 In the Riemannian case, the product of spheres (with their round metrics) $S^2 \times S^{n-2} \cong \frac{\SO(3) \times \SO(n-1)}{\SO(2) \times \SO(n-2)}$, $n \geq 5$ and complex projective space (with Fubini--Study metric) $\bbC\bbP^\rkg \cong \frac{\SU(\rkg+1)}{{\rm S}({\rm U}(1){\rm U}(\rkg))}$, $\rkg = \frac{n}{2} \geq 2$, are not conformally flat.  The corresponding conformal symmetry algebras have dimensions $\binom{n-1}{2} + 3$ and $\rkg^2+2\rkg = \frac{n(n + 4)}{4}$ respectively.  (In fact, all conformal symmetries for these metrics are Killing vector fields.)  For the Lorentzian case, consider the pp-wave
 \[
 \metric_{\rm pp}^{(3,1)} = dy^2 + dz^2 + dw dx + y^2 dw^2.
 \]
 When $n \geq 4$, taking the product $\metric = \metric_{\rm pp}^{(3,1)} + \metric_{\rm euc}^{(n-4,0)}$ with the flat Euclidean metric $\metric_{\rm euc}^{(n-4,0)} = \sum_{i=1}^{n-4} (du^i)^2$ yields a homogeneous non-conformally flat metric with $\binom{n-1}{2} + 4$ linearly independent conformal symmetries \cite{KT2013} (all but one of which are Killing vector fields).  Thus, the numbers $c(n)$ stated in Theorem \ref{T:main-thm} are lower bounds for $\fS(n)$, and since $\fS(n) \leq \fU(n)$ it suffices to show that $\fU(n) = c(n)$.  By homogeneity, at any given point of the models given above, the stabilizer subalgebras must annihilate a nonzero Weyl tensor.  With respect to a chosen basis, these are subalgebras of $\fg_0 = \fco(p,q)$ with dimension $c_0(n) := c(n) - n$.

 As remarked in \cite{KT2013}, establishing $\fS(n)$ in the conformal Riemannian case reduces to studying the maximal {\em isometries} for Riemannian metrics which are not conformally flat.  Several articles due to I.P.~Egorov \cite{Egorov1955,Egorov1956,Egorov1962} contributed to our understanding of this latter problem.  However, Egorov's final result in \cite{Egorov1962} is incomplete.\footnote{Egorov \cite{Egorov1962} asserts ``{\em The maximal order of the group of motions of a nonconformally euclidean space is exactly $\frac{(n-1)(n-2)}{2} + 5$.  This number can be replaced by $\frac{(n-1)(n-2)}{2} + 3$ in case the metric form is definite.''}  In modern terms, ``group of motions'' refers to a ``local group of isometries'', whose study is equivalent to that of isometry algebras.  In the Riemannian case, the $n=4$ and $n=6$ exceptions are omitted, although Egorov was certainly aware of the $n=4$ case \cite{Egorov1955}.  In the Lorentzian case, the result is false by our (conformal) symmetry bound proven in this article.}  Alternatively, a classification of Riemannian spaces with abundant isometries was given in \cite{KN1972}, from which one can conclude that any space with an isometry {\em group} of dimension greater than $c(n)$ is (locally) conformally flat.  However, since this was a global classification, we cannot immediately deduce $\fS(n)$ which concerns the (infinitesimal) symmetry {\em algebra}.\footnote{In this article, we study (conformal) symmetry algebras.  The distinction between symmetry groups versus symmetry algebras is important.  Given a subalgebra $\fk\subset \fg$, there may not exist a global model $G/K$ when there exist subgroups $K\subset G$ corresponding to $\fk$ which are not closed. The smallest example of this phenomenon appears in $\dim(\fg/\fk)=5$ (see \cite{Mostow1950}): take $\fg=\fsu(2) \times \fsu(2)$, $\fk =\{ \opn{diag}(ix,-ix) \times \opn{diag}(i\alpha x, -i\alpha x) \mid x \in \bbR \}$, where $\alpha$ is irrational. In this case, we can construct a 5-dimensional (Riemannian) manifold with a 6-dimensional Lie algebra of isometries, but there is no completion to get the corresponding 6-dimensional isometry group.}  Our proof for $\fS(n)$ is independent of earlier papers on this topic and is purely algebraic in both Riemannian and Lorentzian signatures. See also~\cite{KM2013} for the submaximal dimensions of projective and affine symmetries of pseudo-Riemannian metrics.

 In \cite{KT2013}, we saw that the $(\fg_0)_\bbC = \fco(n,\bbC)$ action on $\Weyl_\bbC = \Weyl \otimes_\bbR \bbC$ has maximal (proper) annihilators realized precisely on the unique (closed) orbit of the lowest weight line.  All such maximal annihilators are conjugate, have dimension $\binom{n-1}{2} + 6 - n = \binom{n-2}{2} + 4$, and any such is the stabilizer subalgebra of a null 2-plane in the standard representation.
 
The $\bbR^*$-factor in $G_0 = \CO(p,q) = \bbR^* \times \SO(p,q)$ acts {\em non-trivially} on $\Weyl$, so $\fk = \fann(\phi) \subset \fg_0$ is uniquely determined by (and has the same dimension as) some $\fk' \subset \fso(p,q)$ which preserves a nonzero $\phi \in \Weyl$ {\em up to scale}.  We will use the following terminology:

 \begin{defn} A subalgebra $\fk \subset \fso(p,q)$ is {\em visible} if there exists $0 \neq \phi \in \Weyl$ such that
 \[
 \fk = \fco(\phi) := \{ X \in \fso(p,q) \mid X \cdot \phi = const \cdot \phi \}.
 \]
 We say that $\phi$ is {\em maximally degenerate} if $\fco(\phi)$ has maximal dimension among all visible subalgebras.
 \end{defn}

 Using $\fso(p,q)$, we regard $\Weyl$ as totally trace-free $(4,0)$-tensors with well-known index symmetries:
 \[
 \phi_{abcd} = -\phi_{bacd} = -\phi_{abdc}, \qquad \phi_{abcd} = \phi_{cdab}, \qquad \phi_{a[bcd]} = 0.
 \]
 The complexification $\Weyl_\bbC$ is an $\fso(n,\bbC)$-representation which is irreducible for $n \geq 5$ with highest weight $2\lambda_2$ when $n \geq 7$, $2\lambda_2 + 2\lambda_3$ when $n=6$, and $4\lambda_2$ when $n=5$ in terms of the fundamental weights $\lambda_i$ of $\fso(n,\bbC)$.  When $n=4$, $\Weyl_\bbC$ decomposes into two irreducible modules with highest weights $4\lambda_1$ and $4\lambda_1'$.  In each case, the (set of) weights is stable with respect to any symmetry of the Dynkin diagram of $\fso(n,\bbC)$, so any (inner or outer) automorphism $f: \fso(n,\bbC) \to \fso(n,\bbC)$ can be extended to an automorphism $F: \Weyl_\bbC \to \Weyl_\bbC$, i.e.
 \[
 f(x) \cdot F(\phi) = F(x\cdot\phi), \qquad \forall x \in \fso(n,\bbC), \quad \forall \phi \in \Weyl_\bbC.
 \]
 Thus, the class of visible subalgebras of $\fso(n,\bbC)$ is stable with respect to the full group of automorphisms.  This is especially important for the $n=8$ case, where the subtle notion of triality arises \cite{Harvey1990}.

 In non-Riemannian and non-Lorentzian signatures, a maximally degenerate Weyl tensor is obtained as follows: taking a basis $\{ \omega^i \}$ of $(\bbR^{p,q})^*$ with respect to which $E = \tspan\{ \omega^1, \omega^2 \}$ is null, we have
 \[
 \phi = (\omega^1 \wedge \omega^2)^2,
 \]
 and $\fco(\phi)$ is the parabolic subalgebra $\fp_2 \subset \fso(p,q)$ which stabilizes $E$. Conversely, any maximally degenerate Weyl tensor can be expressed this way.
 
In Riemannian and Lorentzian signatures, the dimension $\binom{n-2}{2} + 4$ (of $\fp_2$ above) is not realizable by a visible subalgebra since null 2-planes do not exist, and the form of a maximally degenerate Weyl tensor is more complicated -- see \eqref{E:W-Riem1} for $n \neq 4, 6$, \eqref{E:W-Riem2} for $\rkg=2,3,4$, and \eqref{E:W-Lor}. 
 In these signatures, we refer to a subalgebra of $\fso(p,q)$ as {\em inadmissible} if its dimension is strictly less than $c_0(n)$ or strictly greater than $\binom{n-2}{2} + 3$.  We will show that no visible subalgebra has dimension $d$ in the range $c_0(n) +1 \leq d \leq \binom{n-2}{2} + 3$. To show this we enumerate all possible subalgebras of $\fso(p,q)$ in this dimension range and verify that they do not preserve any Weyl tensor up to scale.

The enumeration of subalgebras is based on the classification of reductive Lie algebras in complex simple Lie algebras due to Dynkin~\cite{Dynkin1952a,Dynkin1952b}. In the Riemannian case, the transition to the complex field is straightforward, as any subalgebra of $\fso(n)$ is reductive. Conversely, any semisimple subalgebra of $\fso(n,\bbC)$ admits a unique real form in $\fso(n)$. (Analyzing the central part of reductive subalgebras requires more care, but is also not difficult). In the Lorentzian case, we have to deal with the parabolic subalgebra $\fp_1 \subset \fso(1,n-1)$ and its subalgebras. The branching of $\Weyl$ to these subalgebras can be analyzed using Kostant's Bott--Borel--Weil theorem~\cite{Kos1961, BE1989, CS2009}.  To summarize:

 \begin{theorem} \label{T:subalg-classify} For $n \geq 4$, we have the following classification of maximal visible subalgebras of $\fso(p,q)$ up to automorphisms\footnote{Although $\fu(4)$ and $\fso(2) \times \fso(6)$ are not conjugate in $\fso(8)$, they are equivalent by triality via an outer automorphism of $\fso(8)$.}:
 \[
 \begin{array}{|c|c|c|c|} \hline
 \fso(p,q) & \mbox{Maximal visible subalgebras} & \mbox{Dimension} & \mbox{Range}\\ \hline\hline
 \fso(n) &  
 \begin{array}{lll} \fso(2) \times \fso(n-2) \subset \fso(n)\\
 \fu(2) \subset \fso(5)\\
 \fu(\rkg) \subset \fso(2\rkg) \\ 
 \end{array} & \begin{array}{c} \binom{n-2}{2} + 1 \\ 4\\ \rkg^2 \end{array} & \begin{array}{l} n=5 \mbox{ or } \geq 7\\ n=5\\ n=2\rkg=4 \mbox{ or } 6\end{array}\\ \hline
 \fso(1,n-1) & (\bbR \op \fso(n-3)) \ltimes \bbR^{n-2} \subset \fp_1 & \binom{n-2}{2} + 2 & n \geq 4\\[0.02in] \hline
 \begin{array}{c} \fso(p,q)\\ (p,q \geq 2) \end{array} & \fp_2 & \binom{n-2}{2} + 4 & n \geq 4\\ \hline
 \end{array}
 \]
 \end{theorem}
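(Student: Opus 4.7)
The strategy is to partition subalgebras of $\fso(p,q)$ according to structure and signature, and in each case reduce to a representation-theoretic test on $\Weyl$. In split signatures $p,q \geq 2$ the argument is essentially complete already: taking $\omega^1,\omega^2 \in (\bbR^{p,q})^*$ spanning a null 2-plane, the tensor $\phi = (\omega^1 \w \omega^2)^2$ lies in $\Weyl$ and $\fco(\phi)$ equals the parabolic $\fp_2$. Its dimension $\binom{n-2}{2} + 4$ meets the upper bound on $\dim \fann(\phi)$ from \cite{KT2013}, so $\fp_2$ is the unique maximal visible subalgebra up to conjugacy.

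In the Riemannian case, every subalgebra of $\fso(n)$ is reductive, hence its complexification $\fk_\bbC \subset \fso(n,\bbC)$ is a complex reductive subalgebra. I would apply Dynkin's classification of reductive subalgebras of complex simple Lie algebras to enumerate, up to the full automorphism group of $\fso(n,\bbC)$, all candidates $\fk_\bbC$ of dimension at least $c_0(n) = c(n) - n$ (absorbing triality when $n=8$). For each candidate, the irreducible module $\Weyl_\bbC$ is branched and one searches for a one-dimensional $\fk_\bbC$-subrepresentation on which the centre of $\fk_\bbC$ acts by a character --- equivalently, a Weyl tensor preserved up to scale. The surviving candidates should be exactly the $\fso(2) \times \fso(n-2)$ entry (realized by the Weyl tensor of $S^2 \times S^{n-2}$) together with, for $n \in \{4,5,6\}$, the sporadic embeddings $\fu(\rkg) \subset \fso(2\rkg)$ and $\fu(2) \subset \fso(5)$ (realized by the Weyl tensor of $\bbC\bbP^\rkg$ or its $5$-dimensional analogue).

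In the Lorentzian case, a subalgebra $\fk \subset \fso(1,n-1)$ either preserves an isotropic line --- forcing $\fk \subset \fp_1$ --- or it does not, in which case the $\fk$-action on $\bbR^{1,n-1}$ is completely reducible and $\fk$ is reductive; complexifying and applying the enumeration of the Riemannian step should show that no such $\fk$ has dimension as large as $c_0(n) = \binom{n-2}{2}+2$. It then remains to analyze $\fk \subset \fp_1$. Writing the Levi decomposition $\fp_1 = (\bbR \oplus \fso(n-2)) \ltimes \fp_1^+$ with abelian nilradical $\fp_1^+ \cong \bbR^{n-2}$, I would apply Kostant's Bott--Borel--Weil theorem to compute $H^0(\fp_1^+, \Weyl)$ as a module for the Levi factor. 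Combined with the Riemannian classification applied to subalgebras of $\fso(n-2)$, this forces the maximal visible subalgebra of $\fp_1$ to be $(\bbR \oplus \fso(n-3)) \ltimes \bbR^{n-2}$.

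The main obstacle is the elimination step in the Riemannian complex classification: for every reductive complex subalgebra of $\fso(n,\bbC)$ of dimension $\geq c_0(n)$ not in the stated list, one must verify via branching that $\Weyl_\bbC$ contains no one-dimensional subrepresentation on which the centre acts by a scalar. Careful bookkeeping of outer automorphisms is essential --- most notably triality for $n=8$, which identifies $\fu(4)$ and $\fso(2) \times \fso(6)$ as visible via the same Weyl tensor up to equivalence --- as is the passage between the annihilator $\fann(\phi) \subset \fg_0$ and the scale-preservation condition $\fco(\phi) \subset \fso(p,q)$. The Lorentzian parabolic analysis, once Kostant's theorem is invoked, should be comparatively routine.
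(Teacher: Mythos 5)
Your split-signature and Riemannian steps follow the paper's route (Dynkin's classification of reductive subalgebras of $\fso(n,\bbC)$, branching of $\Weyl_\bbC$, then real forms with triality handled for $n=8$), and your Lorentzian dichotomy --- an invariant null line forcing $\fk\subset\fp_1$, versus a completely reducible action forcing $\fk$ reductive --- is a legitimate and more elementary substitute for the paper's use of Mostow's theorem on maximal subalgebras. The genuine gap is in how you dispose of the reductive branch. Your claim that ``complexifying and applying the enumeration of the Riemannian step should show that no such $\fk$ has dimension as large as $c_0(n)=\binom{n-2}{2}+2$'' is false: the enumeration leaves $\fso(n-1,\bbC)$, of dimension $\binom{n-1}{2}>c_0(n)$, and its real forms $\fso(n-1)$ and $\fso(1,n-2)$ sit inside $\fso(1,n-1)$ and preserve no null line. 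These cannot be eliminated by a dimension count; one must check visibility directly, which works because branching $\Weyl_\bbC$ to $\fso(n-1,\bbC)$ produces no trivial summand (and a semisimple subalgebra preserving a line must act trivially on it).

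More seriously, for $n=6$ the complex enumeration also leaves $\fgl(3,\bbC)$ (dimension $9$) and $\fsl(3,\bbC)$ (dimension $8=c_0(6)$), both of which \emph{are} visible over $\bbC$, so no branching argument can kill them; what must be shown is that the pair $\fgl(3,\bbC)\subset\fso(6,\bbC)$ admits no real form inside $\fso(1,5)$. This requires the classification of real forms of the pair $(\fso(6,\bbC),\fgl(3,\bbC))$ via commuting involutions/anti-involutions (the content of the paper's Section 5): the ambient real forms that occur are $\fso(6)$, $\fso(2,4)$, $\fu^*(3,\bbH)$ and $\fso(3,3)$, but never $\fso(1,5)\cong\fsl(2,\bbH)$, and the $\fsl(3,\bbC)$ case reduces to this one since $\fgl(3,\bbC)$ is recovered as $\fsl(3,\bbC)$ plus its centralizer. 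This real-form analysis (also needed in the Riemannian column to justify that the real forms of $\fgl(\rkg,\bbC)\subset\fso(2\rkg,\bbC)$ landing in compact $\fso(2\rkg)$ are exactly $\fu(\rkg)$) is a substantive missing ingredient in your proposal, not mere bookkeeping; without it the Lorentzian row of the table, and in particular the absence of a $9$- or $8$-dimensional visible subalgebra of $\fso(1,5)$, is not established. The remaining parabolic analysis you outline (structure of $\fs\subset\fp_1$ via the projection to the Levi factor, Kostant's theorem computing $\Weyl^{\fr_1}$, plus the low-dimensional checks for $n=5,6$) matches the paper.
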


A posteriori, we deduce that the maximal visible subalgebra in $\fso(1,n-1)$ stabilizes a degenerate (but not totally null) 2-plane in $\bbR^{1,n-1}$.

The techniques used in the paper for enumerating large subalgebras of (real and complex) semisimple Lie algebras are very generic and can be used to tackle other problems of a similar nature. Whenever appropriate, we provide links to more advanced results in this area, even though they are not directly needed for our main result.

 In Section \ref{S:subalg}, we review Dynkin's classification of subalgebras of complex simple Lie algebras.  We use this in Section \ref{S:Riemannian} to prove Theorem \ref{T:main-thm} in the Riemannian case.  The Lorentzian case is resolved in Section \ref{S:Lorentzian} with the exception of the $n=6$ case, for which it remains to demonstrate that there is no Lorentzian analogue of the inclusion $\fgl(3,\bbC) \subset \fso(6,\bbC)$.  This is proven in Section \ref{S:real}, where we study the more general problem of classifying all real forms of a given reductive subalgebra in a complex simple Lie algebra, focusing in particular on the case of $\fgl(\rkg,\bbC) \subset \fso(2\rkg,\bbC)$.\\
  
 {\bf Conventions:} We use the standard labelling $A_\rkg = \fsl(\rkg+1,\bbC$), $B_\rkg = \fso(2\rkg+1,\bbC)$, $C_\rkg = \fsp(2\rkg,\bbC)$, $D_\rkg = \fso(2\rkg,\bbC)$, and $E_6,E_7,E_8,F_4,G_2$ for complex simple Lie algebras, and the Bourbaki ordering of simple roots.  We recall the special isomorphisms $\fso(3,\bbC) = B_1 = A_1 = \fsl(2,\bbC)$,  $\fso(4,\bbC) = D_2 = A_1 \times A_1$, and $\fso(6,\bbC) = D_3 = A_3 = \fsl(4,\bbC)$.  Our symmetrizers and antisymmetrizers are projection operators, e.g. $ab = \frac{1}{2} (a \otimes b + b \otimes a)$.\\
 
 {\bf Acknowledgements:} We are grateful for conversations with B. Kruglikov.  R.L. Bryant also gave useful insights on
 \href{http://mathoverflow.net/questions/105995/most-degenerate-weyl-tensors-in-riemannian-and-lorentzian-signature}{MathOverflow}.  D.T. was partially supported by a Research Fellowship from the Australian Research Council. We would also like to thank the Erwin Schr\"odinger Institute, where the essential part of this research was carried out. 
 
 \section{Subalgebras of complex simple Lie algebras}
 \label{S:subalg}
 
 \subsection{Dynkin's subalgebra classification}
 \label{S:Dynkin-list}
 
 In \cite{Dynkin1952a}, Dynkin classified all maximal\footnote{A subalgebra $\fk \subset \fg$ is {\em maximal} if it is not strictly contained in any proper subalgebra of $\fg$.} subalgebras $\ff$ of a complex classical Lie algebra $\fg$.  (See also \cite[Chp.\ 6, Thms.\ 3.1--3.3]{OV1994}).  Up to conjugacy, there are three possibilities for $\ff \subset \fg$:
 \begin{enumerate}
 \item {\em reducible}, i.e. there is a proper $\ff$-invariant subspace in the standard $\fg$-representation.  Then:
 \begin{enumerate}
 \item $\ff$ is a maximal parabolic subalgebra of $\fg$, or
 \item $\fg = \fso(n,\bbC)$: $\ff \cong \fso(k,\bbC) \times \fso(n-k,\bbC)$ for $0 < k < n$, or
 \item $\fg = \fsp(n,\bbC)$: $\ff \cong \fsp(k,\bbC) \times \fsp(n-k,\bbC)$ for $k,n$ both even, $0 < k < n$.
 \end{enumerate}
 \item {\em irreducible non-simple}: All possibilities are given in Table \ref{F:S-ns}.
 \begin{table}[h]
 $\begin{array}{|c|c|l|} \hline
 \fg & \ff & \mbox{Restrictions on $d_i = \dim(V_i)$}\\ \hline
 \fsl(V_1 \ot V_2) & \fsl(V_1) \times \fsl(V_2) & d_i \geq 2\\
 \fso(V_1 \ot V_2) & \fso(V_1) \times \fso(V_2) & 4 \neq d_i \geq 3\\
 \fso(V_1 \ot V_2) & \fsp(V_1) \times \fsp(V_2) & d_i \geq 2;\, (d_1,d_2) \neq (2,2)\\
 \fsp(V_1 \ot V_2) & \fsp(V_1) \times \fso(V_2) & d_1 \geq 2;\, 4 \neq d_2 \geq 3 \mbox{ or } (d_1,d_2) = (2,4)\\ \hline
 \end{array}$
 \caption{Maximal irreducible non-simple subalgebras $\ff$ of a classical complex simple Lie algebra $\fg$}
 \label{F:S-ns}
 \end{table}

 \item {\em irreducible simple}: Given $\ff$ simple, let $\psi : \ff \to \fgl(\bbV)$ be a (nontrivial) irreducible representation (irrep). Then $\psi$ is {\em self-dual} if there is a nondegenerate $\ff$-invariant bilinear form on $\bbV$.  Aside from the exceptions\footnote{While \cite[Table 7]{OV1994} lists $B_3 = \mathfrak{spin}(7,\bbC) \to \fso(\bbV_{\lambda_3}) \cong \fso(8,\bbC)$ as non-maximal, this is a typo.} listed in 
\cite[Table 7]{OV1994}, we always have:
 \begin{enumerate}
 \item If $\psi$ is not self-dual, then $\psi(\ff) \subset \fsl(\bbV)$ is maximal;
 \item If $\psi$ is orthogonal, then $\psi(\ff) \subset \fso(\bbV)$ is maximal;
 \item If $\psi$ is symplectic, then $\psi(\ff) \subset \fso(\bbV)$ is maximal.
 \end{enumerate}

 \end{enumerate}
 
 In the reducible case, consider a maximal parabolic subalgebra $\fp \subset \fg$.  Up to conjugacy, these are in 1-1 correspondence with Dynkin diagrams marked by a cross on a single node $k$ (corresponding to the simple root $\alpha_k$), and we denote the corresponding parabolic subalgebra $\fp_k$.  Any maximal {\em reductive} subalgebra of a (maximal) parabolic subalgebra is conjugate to its {\em Levi factor}~\cite{Mostow1956}.  There is a simple recipe to determine its structure:
 
 \begin{framed}
 \begin{recipe}
 The Levi factor of a parabolic subalgebra has center with dimension equal to the number of crosses on the corresponding marked Dynkin diagram.  Removing all crosses yields the Dynkin diagram of the semisimple part of the Levi factor.
 \end{recipe}
 \end{framed}

 \begin{example}
 The Levi factors for the maximal parabolics in $B_\rkg$ and $D_\rkg$ are given in Table \ref{F:Levi}.
 \begin{table}[h]
 \[
 \begin{array}{|c|c|c|c|} \hline
 \fg & k & \mbox{Levi factor} & \mbox{Dimension}\\ \hline\hline
 B_\rkg & 1 & \bbC \times B_{\rkg-1} & \binom{2\rkg-1}{2} + 1\\
 & 2 \leq k \leq \rkg-1 & \bbC \times A_{k-1} \times B_{\rkg-k} & k^2 + \binom{2\rkg-2k+1}{2}\\
 & \rkg & \bbC \times A_{\rkg-1} & \rkg^2\\ \hline
 D_\rkg & 1 & \bbC \times D_{\rkg-1} & \binom{2\rkg-2}{2} + 1\\
 & 2 \leq k \leq \rkg-2 & \bbC \times A_{k-1} \times D_{\rkg-k} & k^2 + \binom{2\rkg-2k}{2}\\
 & \rkg-1 \mbox{ or } \rkg & \bbC \times A_{\rkg-1} & \rkg^2\\ \hline
 \end{array}
 \]
 \caption{Levi factors of maximal parabolic subalgebras of $B_\rkg$ and $D_\rkg$}
 \label{F:Levi}
 \end{table}
 \end{example}
 
 In the irreducible simple case, any irrep $\psi : \ff \to \fgl(\bbV)$ is generated from its unique dominant integral (highest) weight $\lambda$.  Writing $\lambda = \sum_i r_i \lambda_i$ in terms of fundamental weights $\{ \lambda_i \}$ for $\ff$, we encode $\lambda$ on the Dynkin diagram of $\ff$ by putting the coefficients $r_i$ above the corresponding nodes.  Then $\psi$ is self-dual if and only if $\lambda$ is invariant under the {\em duality involution}.  The duality involution is trivial (so all irreps are self-dual) except for $A_\rkg$ for $\rkg \geq 2$, $D_\rkg$ for $\rkg$ odd, and $E_6$, where it is the unique non-trivial Dynkin diagram automorphism.
 
 Given a self-dual irrep $\psi : \ff \to \fgl(\bbV_\lambda)$, we can determine if it is orthogonal or symplectic as follows \cite[Corollary on page 98]{OV1994}.  If $\ff$ does not appear in Table \ref{F:self-dual}, then the irrep is orthogonal.\footnote{Since we take the Bourbaki ordering, the coefficients in the $E_7$ case are a permutation of those appearing in \cite{OV1994}.}  Otherwise, writing $\lambda = \sum_i r_i \lambda_i$, we examine the parity of the linear combination of the coefficients indicated in the table.  The irrep is orthogonal if the parity is even and symplectic if it is odd.

 \begin{table}[h]
 \[
 \begin{array}{|c|c|c|c|c|} \hline
 A_{4k+1} & B_\rkg\, (\rkg \equiv 1,2\, \mod 4) & C_\rkg & D_{4k+2} & E_7\\ \hline\hline
 r_{2k+1} & r_\rkg & \sum_{i \,\,{\rm odd}} r_i & r_{4k+1} + r_{4k+2} & r_1 + r_5 + r_7\\[0.02in] \hline
 \end{array}
 \]
 \caption{Data to determine if a self-dual irrep $\psi : \ff \to \fgl(\bbV_\lambda)$ is orthogonal or symplectic}
 \label{F:self-dual}
 \end{table}

 \subsection{Regular reductive subalgebras}
 \label{S:regular}
 
 Dynkin's classification in Section \ref{S:Dynkin-list} will be mostly sufficient for our purposes, but we will need some facts about reductive subalgebras of $\fg_2$ (Example \ref{E:g2-red-subalg}).  For this purpose, we recall an alternative classification of subalgebras of an {\em arbitrary} complex semisimple Lie algebra $\fg$, also introduced by Dynkin.  Our presentation here summarizes that in \cite[Chp.\ 6]{OV1994}.
 A subalgebra $\ff \subset \fg$ is {\em regular} if there is some Cartan subalgebra $\fh \subset \fg$ such that $[\fh,\ff] \subset \ff$. In this case, letting $\Delta:= \Delta_\fg(\fh)  \subset \fh^*$ be the corresponding root system, there exists a subspace $\ft \subset \fh$ and a subsystem $\Gamma \subset \Delta$ such that
 \begin{align} \label{E:f-reg-subalg}
 \ff = \ff(\ft,\Gamma) = \ft \op \bop_{\alpha \in \Gamma} \fg_\alpha,
 \end{align}
 where $\fg_\alpha$ is the root space corresponding to $\alpha$.  Conversely, given $\ft \subset \fh$ and $\Gamma \subset \Delta$, it follows that $\ff(\ft,\Gamma)$ is a regular subalgebra iff: (i) $\Gamma \subset \Delta$ is {\em closed}, i.e. $\alpha, \beta \in \Gamma$ and $\alpha + \beta \in \Delta$, then $\alpha + \beta \in \Gamma$; and (ii) if $\{ \pm \alpha \} \subset \Gamma$, then $[\fg_\alpha, \fg_{-\alpha}] \subset \ft$.  It is moreover reductive iff $\Gamma$ is {\em symmetric}, i.e. $\alpha \in \Gamma$ iff $-\alpha \in \Gamma$.  In the reductive case, $\ff(\ft,\Gamma)$ is semisimple iff $\ft = \tspan\{ [\fg_\alpha,\fg_{-\alpha}] \mid \alpha \in \Gamma \}$, which makes $\ft \subset \ff$ a Cartan subalgebra, and $\Gamma = \Delta_\ff(\ft)$ a root system.  If $\Gamma \subset \Delta$ is closed and symmetric, define the semisimple subalgebra $\ff(\Gamma) := \ff(\ft,\Gamma)$, where $\ft = \tspan\{ [\fg_\alpha,\fg_{-\alpha}] \mid \alpha \in \Gamma \}$.
 
  \begin{example} \label{E:ab-ss} Let $\fa \subset \fg$ be an abelian subalgebra consisting of semisimple elements.  Then the centralizer $\fz(\fa) \subset \fg$ of $\fa$ is a regular reductive subalgebra.  In particular, any maximal reductive subalgebra with non-trivial centre is regular.
 \end{example}
 
 \begin{example}
 Let $\fg$ be a complex semisimple Lie algebra and $\fp$ a parabolic subalgebra, which induces a $\bbZ$-grading $\fg = \fg_- \op \fg_0 \op \fg_+$.  Then $\Delta(\fg_0) \subset \Delta$ is a closed symmetric subsystem.
 \end{example}
  
Let $N \subset \Delta$.  Define $[N] := \{ \alpha \in \Delta \mid \alpha \in \tspan_\bbZ(N) \}$.  Then $N$ is a {\em $\pi$-system} if $N$ is linearly independent and $\alpha - \beta \not\in \Delta$ for all $\alpha,\beta\in N$.
 If $\Gamma \subset \Delta$ is closed and symmetric, it is a root system of $\ff(\Gamma)$, so $\Gamma = [N]$ for some simple roots $N \subset\Gamma$, which is necessarily a $\pi$-system.  Conversely, any $\pi$-system determines a closed symmetric subsystem.  Thus, the following objects are equivalent:
 \Ben
 \item maximal rank (i.e. $\ft = \fh$) regular reductive subalgebras $\ff(\fh,\Gamma) \subset \fg$;
 \item regular semisimple subalgebras $\ff(\Gamma) \subset \fg$;
 \item closed symmetric subsystems $\Gamma \subset \Delta$;
 \item $\pi$-systems $N \subset \Delta$.
 \Een  

There is an efficient classification \cite{GG1978}, \cite[Chp.~6, Thm.~1.2]{OV1994} of maximal proper $\pi$-systems $N \subset \Delta$.   If $\fg$ is semisimple but not simple, then by the Corollary in \cite[Chp.~6, Prop.~1.1]{OV1994}, all regular semisimple subalgebras of $\fg$ are products of such subalgebras in each simple ideal of $\fg$.
So suppose $\fg$ is simple, i.e. $\Delta$ is indecomposable, and choose simple roots $\Pi = \{ \alpha_1, ..., \alpha_\rkg \} \subset \Delta$.  For any $\alpha \in \Delta$, write $\alpha = \sum_i m_i(\alpha) \alpha_i$.  Let $\widetilde\alpha \in \Delta$ be the highest root, with coefficients $n_i = m_i(\widetilde\alpha)$.  Let $\alpha_0 = -\widetilde\alpha$ be the lowest root, and $\widetilde\Pi = \{ \alpha_0 \} \cup \Pi$ the extended system of simple roots.

\begin{framed}
\begin{recipe} \label{R:reg-reductive}
 All maximal regular reductive subalgebras of a complex simple Lie algebra $\fg$ correspond to a $\pi$-system $N \subset \Delta$ of type I or II, defined as follows:  For $1 \leq k \leq \rkg = \rnk(\fg)$,
 \Ben
 \item[I:] $n_k = 1$.  Take $N = \Pi \backslash \{ \alpha_k \}$, so $\Gamma = \{ \alpha \in \Delta \mid  m_k(\alpha) = 0 \}$.  Then $\rnk(\Gamma) = \rnk(\Delta) - 1$.
 \item[II:] $n_k > 1$ and $p | n_k$ for some $p > 1$ prime.  Take $N = \widetilde\Pi \backslash \{ \alpha_k \}$, and $\Gamma = \{ \alpha\in \Delta \mid m_k(\alpha) \equiv 0 \mod p \}$.  Then $\rnk(\Gamma) = \rnk(\Delta)$.
 \Een
 \end{recipe}
 \end{framed}
 
By \cite[Chp.~6, Prop.~1.5]{OV1994}, the classification in Recipe \ref{R:reg-reductive} is up to conjugacy, and from this we can deduce the classification up to any subgroup lying between the group of inner automorphisms $\opn{Int}(\fg)$ and the full group of automorphisms $\Aut(\fg)$.
 
 \begin{example}
 There are two maximal regular reductive subalgebras of $\fso(4,\bbC) \cong \fso(3,\bbC)_L \times \fso(3,\bbC)_R$, namely $\fso(3,\bbC)_L \times \fso(2,\bbC)_R \cong \fgl(2,\bbC)$ and $\fso(2,\bbC)_L \times \fso(3,\bbC)_R \cong \fgl(2,\bbC)$.\footnote{We have indicated subscripts $L,R$ to remind the reader that the embeddings of $\fso(3,\bbC)_L$, $\fso(3,\bbC)_R$ into $\fso(4,\bbC)$ are {\em irreducible}. This differs from the canonical reducible embedding $\fso(3,\bbC) \inj \fso(4,\bbC)$.}  Though these subalgebras are not conjugate, they are equivalent via an outer automorphism of $\fso(4,\bbC)$.
 \end{example}
 
 \begin{example} By Recipe \ref{R:reg-reductive} and Table \ref{F:BD-Dynkin}, we obtain the complete list of maximal regular reductive subalgebras of $B_\rkg = \fso(2\rkg+1,\bbC)$ and $D_\rkg = \fso(2\rkg,\bbC)$ in Table \ref{F:reg-subalg}.
 \end{example}
 
 \begin{center}
 \begin{table}[h]
 \begin{tabular}{|c|c|c|c|} \hline
 $B_2$ & $B_\rkg\,\, (\rkg \geq 3)$ \\ \hline
 \begin{tiny}
 \begin{tikzpicture}[scale=\myscale,baseline=-3pt]

 \dbond{r}{0,0};
 \dbond{l}{1,0};
 \DDnode{w}{0,0}{\alpha_1};
 \DDnode{w}{1,0}{\alpha_2};
 \DDnode{w}{2,0}{-\widetilde\alpha};

 \useasboundingbox (-.4,-.2) rectangle (2.4,0.55);
 \end{tikzpicture}
 \end{tiny} & \begin{tiny}
 \begin{tikzpicture}[scale=\myscale,baseline=-3pt]

 \bond{0,0};
 \bond{1,0};
 \tdots{2,0};
 \bond{3,0};
 \dbond{r}{4,0};
 \draw (1,0) -- +(-0.5,-0.865);
 \DDnode{w}{0.5,-0.865}{\hspace{-0.12in}-\widetilde\alpha};
 
  \foreach \x/\y in {0/1,1/2,2/3,3/ \rkg-2,4 / \rkg-1, 5 / \rkg}
	\DDnode{w}{\x,0}{\alpha_{\y}};

 \useasboundingbox (-.4,-1.2) rectangle (5.4,0.55);
 \end{tikzpicture}
 \end{tiny} \\ \hline
  $\widetilde\alpha = \alpha_1 + 2\alpha_2$ & $\widetilde\alpha = \alpha_1 + 2\alpha_2 + ... + 2\alpha_\rkg$ \\ \hline\hline
$D_3$ & $D_\rkg\,\, (\rkg \geq 4)$\\ \hline
 \begin{tiny}
 \begin{tikzpicture}[scale=\myscale,baseline=-3pt]

 \diagbond{u}{0,0};
 \diagbond{d}{0,0};
 \draw (1,0) -- +(-0.5,-0.865);
 \draw (1,0) -- +(-0.5,0.865);
 \DDnode{w}{1,0}{\,\,\,\,-\widetilde\alpha};
   
 \DDnode{w}{0,0}{\!\!\!\alpha_1};
 \DDnode{w}{0.5,0.865}{\alpha_2};
 \DDnode{w}{0.5,-0.865}{\alpha_3};

 \useasboundingbox (-.4,-.2) rectangle (1.4,0.55);
 \end{tikzpicture}
 \end{tiny} &
 \begin{tiny}
 \begin{tikzpicture}[scale=\myscale,baseline=-3pt]

 \bond{0,0};
 \bond{1,0};
 \tdots{2,0};
 \bond{3,0};
 \diagbond{u}{4,0};
 \diagbond{d}{4,0};
 \draw (1,0) -- +(-0.5,-0.865);
 \DDnode{w}{0.5,-0.865}{\hspace{-0.12in}-\widetilde\alpha};
  
  \foreach \x/\y in {0/1,1/2,2/3,3/ \rkg-3}
	\DDnode{w}{\x,0}{\alpha_{\y}};
 \DDnode{w}{4,0}{\alpha_{\rkg-2}};
 \DDnode{w}{4.5,0.865}{\alpha_{\rkg-1}};
 \DDnode{w}{4.5,-0.865}{\,\,\alpha_\rkg};

 \useasboundingbox (-.4,-1.2) rectangle (5.4,0.55);
 \end{tikzpicture}
 \end{tiny}\\ \hline
 $\widetilde\alpha = \alpha_1 + \alpha_2 + \alpha_3$ & $\widetilde\alpha = \alpha_1 + 2\alpha_2 + ... + 2\alpha_{\rkg-2} + \alpha_{\rkg-1} + \alpha_\rkg$\\ \hline
 \end{tabular}
 \caption{Extended Dynkin diagrams and highest roots for $B_\rkg$ and $D_\rkg$}
 \label{F:BD-Dynkin}
 \end{table}
 \end{center}
 \begin{center}
 \begin{table}[h]
 $\begin{array}{|c|c|c|c|c|} \hline
  & k & \mbox{Type} & \begin{tabular}{c} \mbox{Reductive}\\ \mbox{subalgebra} \end{tabular} & \mbox{Classical form}\\ \hline\hline
 B_\rkg & 1 & \mbox{I} & \bbC \times B_{\rkg-1} & \bbC \times \fso(2\rkg-1,\bbC)\\
  (\rkg \geq 2) & 2 \leq k \leq \rkg-1 & \mbox{II} & D_k \times B_{\rkg - k} & \fso(2k,\bbC) \times \fso(2(\rkg-k) + 1,\bbC)\\
 & k = \rkg & \mbox{II} & D_\rkg & \fso(2\rkg,\bbC)\\ \hline
 D_\rkg  & 1 & \mbox{I} & \bbC \times D_{\rkg-1} & \bbC \times \fso(2(\rkg-1),\bbC)\\
 (\rkg \geq 3)& \rkg-1, \rkg & \mbox{I} & \bbC \times A_{\rkg-1} & \fgl(\rkg,\bbC)\\
 & 2 \leq k \leq \rkg-2 & \mbox{II} & D_k \times D_{\rkg - k} & \fso(2k,\bbC) \times \fso(2(\rkg-k),\bbC)\\ \hline
 \end{array}$
 \caption{Maximal regular reductive subalgebras of $B_\rkg$ and $D_\rkg$}
 \label{F:reg-subalg}
 \end{table}
 \end{center}
 
  An $S$-subalgebra is a reductive\footnote{In fact, any $S$-subalgebra is semisimple.} subalgebra not contained in any proper regular subalgebra. Note that $\fso(k,\bbC) \times \fso(n-k,\bbC) \subset \fso(n,\bbC)$, $0 < k < n$, are always regular subalgebras when $n$ is odd.  When $n$ is even, it is regular when $k$ is even, and it is an $S$-subalgebra when $k$ is odd.
 
 \begin{example} \label{E:g2-red-subalg} The 14-dimensional (complex) simple Lie algebra $\fg_2$ has $\widetilde\alpha = 3\alpha_1 + 2\alpha_2$ and extended Dynkin diagram
  \begin{tiny}
 \begin{tikzpicture}[scale=\myscale,baseline=-3pt]

 \tbond{l}{0,0};
 \bond{1,0};
 \DDnode{w}{0,0}{\alpha_1};
 \DDnode{w}{1,0}{\alpha_2};
 \DDnode{w}{2,0}{-\widetilde\alpha};

 \useasboundingbox (-.4,-.2) rectangle (2.4,0.55);
 \end{tikzpicture}
 \end{tiny}.  There are two distinct $\pi$-systems, each of type II.  For $k=1$ and $k=2$, we get diagrams of type $A_2 = \fsl(3,\bbC)$ and $A_1 \times A_1 = \fsl(2,\bbC) \times\fsl(2,\bbC)$ respectively.  Thus, any regular reductive subalgebra is at most 8-dimensional.  By \cite[Thm.~3.4 \& 3.5 on pg.\ 207]{OV1994}, all $S$-subalgebras are at most 3-dimensional.  Thus, any reductive subalgebra is at most 8-dimensional.
 \end{example}

 \section{The Riemannian case}
 \label{S:Riemannian}
 
 We return now to our original problem.  Since $\SO(n)$ is compact, then every subalgebra of $\fso(n)$ is compact and reductive.  By compactness, every element of $\fso(n)$ acts on $\Weyl$ with either complex conjugate eigenvalues or the real eigenvalue 0.  Thus, any visible subalgebra $\fk \subset \fso(n)$ is of the form $\fk = \fann(\phi)$ for some nonzero $\phi \in \Weyl$.  There is a 1-1 correspondence between reductive subalgebras of $\fso(n)$ and reductive subalgebras of its complexification $\fso(n,\bbC)$ \cite[Chp.~4, Thm.~2.7]{OV1994}.  We will:
 \Ben
 \item Find all reductive subalgebras $\fk \subset \fso(n,\bbC)$ with $\dim(\fk)$ greater than 
 \[
 c_0(n) := \l\{ \begin{array}{cl} \binom{n-2}{2} + 1, & n = 5 \mbox{ or } n \geq 7;\\ \frac{n^2}{4}, & n=4 \mbox{ or } 6\end{array}\r..
 \]
 \item For each $\fk$ above, check that no nonzero Weyl tensor is annihilated.
 \Een
 The $n=4$ case is exceptional since $\fso(4) \cong \fso(3)_L \times \fso(3)_R$ is semisimple and not simple.  Any proper subalgebra of $\fso(4)$ has at most dimension $c_0(4) = 4$, and this bound is realized by $\dim(\fu(2))$, so $\fU(4) = 8$ (realized by $\bbC\bbP^2$).  We concentrate on the $n \geq 5$ case.

 We use Dynkin's subalgebra classification to study {\em reductive} subalgebras of $\fso(n,\bbC)$ with dimension at least $c_0(n)$, where $n = 2\rkg+1$ for $B_\rkg$ and $n = 2\rkg$ for $D_\rkg$.  Consider first the reducible case.  Among $\fso(k,\bbC) \times \fso(n-k,\bbC)$, these include only $\fso(n-1,\bbC)$ and $\fso(2,\bbC) \times \fso(n-2,\bbC)$, where the latter is excluded when $n=6$.  Up to automorphisms, the Levi factors in Table \ref{F:Levi} have maximal dimension $c_0(n)$, with equality realized by:
 \begin{align} \label{E:Riem-subalg}
 \left\{ \begin{array}{ll} \bbC \times \fso(n-2,\bbC) \subset \fso(n,\bbC), & n=5 \mbox{ or } n \geq 7;\\
 \fgl(\rkg,\bbC) \subset \fso(2\rkg,\bbC), & \rkg = 2,3; \\ \fgl(2,\bbC) \subset \fso(5,\bbC).
 \end{array} \right.
 \end{align}
Note that by triality, i.e. via an outer automorphism of $\fso(8,\bbC)$, we have $\fgl(4,\bbC) \cong \bbC \times \fso(6,\bbC)$.
 
 In the irreducible non-simple case (Table \ref{F:S-ns}), all subalgebras are inadmissible:
\begin{itemize}
 \item $\fso(V_1) \times \fso(V_2) \subset \fso(V_1 \otimes V_2)$ :  $n = d_1 d_2$ with $d_1,d_2 \geq 3$, and we may assume $d_1 \leq \sqrt{n}$, so
 \[
 \dim(\fso(V_1) \times \fso(V_2)) = \binom{d_1}{2} + \binom{d_2}{2} < \frac{d_1{}^2 + d_2{}^2}{2} \leq \frac{3^4 + n^2}{18} < \binom{n-2}{2} + 1 = c_0(n).
 \]
 \item $\fsp(V_1) \times \fsp(V_2) \subset \fso(V_1 \otimes V_2)$ :  
  Recall $\dim(\fsp_{2k}(\bbC)) = k(2k+1)$.  If $n=d_1 d_2$, and $d_1,d_2 \geq 2$ (both even) with $(d_1,d_2) \neq (2,2)$ and $d_1 \leq \sqrt{n}$, then
 \begin{align*}
 \dim(\fsp(V_1) \times \fsp(V_2)) &= \binom{d_1+1}{2} + \binom{d_2+1}{2} < \frac{(d_1+1)^2 + (d_2+1)^2}{2} \\
 &= \frac{d_1{}^2+d_2{}^2}{2} + d_1+d_2+1
 \leq \frac{2^4+n^2}{2^3} + 2+\frac{n}{2}+1 \\
 &= \frac{n^2}{8} + \frac{n}{2} + 5 < \binom{n-2}{2} + 1 = c_0(n), \qquad \forall n \geq 9.
 \end{align*}
 For the remaining $n=8$ case, $\dim(\fsp(V_1) \times \fsp(V_2)) = 13 < c_0(8) = 16$.
 
\end{itemize}
 
Finally, consider $\psi: \ff \to \fgl(\bbV)$ irreducible simple.  If $n = \dim(\bbV) \geq a := \dim(\ff)$, then $c_0(n) \geq \binom{n-2}{2} + 1 \geq \binom{a-2}{2} + 1 > a$ for $a \geq 6$, so $\ff$ is inadmissible.  (Clearly, $\ff = \fsl(2,\bbC)$ is also inadmissible.)  The orthogonal irreps $\psi : \ff \to \fso(\bbV)$ with $\dim(\bbV) < \dim(\ff) < \dim(\fso(\bbV))$ are classified in Appendix \ref{A:irreps}.  This list consists of:
 \begin{itemize}
 \item $(B_3,\lambda_3)$: By triality (i.e. a $D_4$ outer automorphism), $B_3 \to \fso(\bbV_{\lambda_3}) \cong \fso(8,\bbC)$ is equivalent to the standard reducible inclusion $\fso(7,\bbC) \subset \fso(8,\bbC)$.
 \item $(B_4,\lambda_4)$: $\dim(\bbV_{\lambda_4}) = 16$ and $92 = c_0(16) > \dim(B_4) = 36$.
 \item $(C_\rkg,\lambda_2)$: $n = \dim(\bbV_{\lambda_2}) = \binom{2\rkg}{2} -1$, and $c_0(n) > \dim(C_\rkg) = \binom{2\rkg+1}{2}$ for $\rkg \geq 3$.
 \item $(F_4,\lambda_4)$: $n=\dim(\bbV_{\lambda_4}) = 26$ and $c_0(n) = 277 > \dim(\ff) = 52$.
 \item $(G_2,\lambda_1)$:  According to \cite[Theorem 4.3]{CI2008}, branching $\Weyl$ from $\fso(7,\bbC)$ down to $\fg_2$ does not yield a trivial factor, so $\fg_2 \subset \fso(7,\bbC)$ is not visible.  Since $c_0(7) = 11$, then any reductive subalgebra is inadmissible, since these are at most 8-dimensional by Example \ref{E:g2-red-subalg}.
 \end{itemize} 
 
From the above analysis, for $n \geq 5$, any maximal reductive subalgebra of $\fso(n,\bbC)$ with dimension greater than $c_0(n)$ has dimension $\binom{n-1}{2}$, and moreover:

 \begin{theorem} \label{T:so-subalg} Any proper reductive subalgebra $\fs \subset \fso(n,\bbC)$ has dimension at most $\binom{n-1}{2}$ for $n \geq 5$ or $n=3$.   For such $n$, if $\dim(\fs) = \binom{n-1}{2}$, then: (i) when $n \neq 8$, $\fs$ is conjugate to $\fso(n-1,\bbC)$; (ii) when $n=8$, $\fs$ is equivalent to $\fso(7,\bbC)$ via an automorphism of $\fso(8,\bbC)$.
 \end{theorem}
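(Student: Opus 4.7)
The plan is to apply Dynkin's classification from Section~\ref{S:Dynkin-list} to enumerate the maximal proper subalgebras of $\fso(n,\bbC)$ and then to bound the reductive subalgebras inside each. Every proper reductive subalgebra $\fs$ is contained in some maximal proper subalgebra $\fm$; if $\fm$ is parabolic then $\fs$ further embeds into the Levi factor of $\fm$, so in all cases $\dim \fs$ is bounded by the dimension of something drawn from: (a) the Levi factors of maximal parabolics (Table~\ref{F:Levi}); (b) the reducible maximal subalgebras $\fso(k,\bbC) \times \fso(n-k,\bbC)$ for $1 \leq k \leq n-1$; (c) the irreducible non-simple maximal subalgebras (Table~\ref{F:S-ns}); and (d) the irreducible simple maximal subalgebras. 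First I would dispatch $n=3$: $\fso(3,\bbC) \cong \fsl(2,\bbC)$ has only one-dimensional proper reductive subalgebras (the Cartan subalgebras), all mutually conjugate, meeting the bound $1 = \binom{2}{2}$.

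For $n \geq 5$, in class (b) the function $k \mapsto \binom{k}{2} + \binom{n-k}{2}$ on $\{1,\dots,n-1\}$ attains its maximum $\binom{n-1}{2}$ precisely at $k \in \{1,n-1\}$, each producing $\fso(n-1,\bbC)$; the two endpoint embeddings are mutually conjugate via any element of the orthogonal group that swaps the complementary summands. In class (a), a short inequality check against the entries of Table~\ref{F:Levi} confirms that every Levi factor has dimension strictly less than $\binom{n-1}{2}$ for $n \geq 5$. For classes (c) and (d), the computations already carried out earlier in this section show that every such subalgebra has dimension at most $c_0(n) < \binom{n-1}{2}$, with the single exception of the spinor embedding $(B_3,\lambda_3)\colon \fso(7,\bbC) \hookrightarrow \fso(8,\bbC)$, which realizes $21 = \binom{7}{2}$.

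Assembling the four cases yields the dimension bound $\dim \fs \leq \binom{n-1}{2}$, with maximal-dimension realizers being $\fso(n-1,\bbC)$ up to conjugacy for every $n \geq 5$, plus the spinor embedding $B_3 \hookrightarrow \fso(8,\bbC)$ when $n=8$. The case $n \neq 8$ uniqueness is then immediate. When $n = 8$ there are a priori two non-conjugate realizers, and the one genuine obstacle I anticipate is to show they are equivalent under an outer automorphism of $\fso(8,\bbC)$; everything else is a routine comparison against the tables of Section~\ref{S:subalg}. The resolution is triality: the outer automorphism group $\Aut(\fso(8,\bbC))/\mathrm{Int}(\fso(8,\bbC)) \cong S_3$ permutes the three eight-dimensional fundamental representations of $D_4$ (the standard vector representation and the two half-spinor representations), and a triality automorphism sending the standard representation to a half-spinor transports the reducible embedding $\fso(7,\bbC) \subset \fso(8,\bbC)$ to the spinor one, identifying the two.
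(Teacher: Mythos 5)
Your proposal is correct and takes essentially the same route as the paper: Dynkin's classification of maximal subalgebras, Mostow's result placing reductive subalgebras of parabolics inside Levi factors, dimension counts in the reducible, irreducible non-simple, and irreducible simple classes, and triality to identify the spinor embedding $(B_3,\lambda_3)$ with the standard $\fso(7,\bbC)\subset\fso(8,\bbC)$ when $n=8$. One small correction: $(G_2,\lambda_1)\subset\fso(7,\bbC)$ has dimension $14>c_0(7)=11$, so not every simple irreducible case is bounded by $c_0(n)$ as you assert, but since $14<\binom{6}{2}=15$ this does not affect the bound $\binom{n-1}{2}$ or the analysis of the equality case.
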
 
 
 This recovers a result of Montgomery--Samelson \cite[Lemmas 4 \& 7]{MS1942} and slightly clarifies the $n=8$ case.
The subalgebra $\fso(n-1,\bbC) \subset \fso(n,\bbC)$ does not annihilate a Weyl tensor -- see Appendix \ref{A:branch}.  By Theorem \ref{T:so-subalg}, any proper reductive subalgebra of $\fso(n-1,\bbC)$ has dimension at most $\binom{n-2}{2} < c_0(n)$ for $n \geq 6$, so is inadmissible.  For $n=5$, the maximal reductive subalgebras of $\fso(4,\bbC)$ have dimension $4 = c_0(5)$.  This completes the proof of Theorem \ref{T:main-thm} in the Riemannian case.  Using \eqref{E:Riem-subalg}, most of Theorem \ref{T:subalg-classify} (Riemannian case) has also been proven.  It is clear that $\bbC \times \fso(n-2,\bbC) \subset \fso(n,\bbC)$ admits in $\fso(n)$ only the real form $\fso(2) \times \fso(n-2)$.  It remains to classify real forms of $\fgl(\rkg,\bbC) \subset \fso(2\rkg,\bbC)$ in $\fso(2\rkg)$ (for $\rkg = 2,3,4$) and this is done in Section \ref{S:real}.
 
 Let us exhibit all maximally degenerate Weyl tensors.  Take $\bbR^n$ with standard metric, and standard basis $\{ \omega^i \}$ of $(\bbR^n)^*$.  By Appendix \ref{A:branch}, there is a unique Weyl line which is invariant under $\fso(2) \times \fso(n-2)$.   Explicitly, this line is spanned by:
 \begin{align} \label{E:W-Riem1}
 \phi = \binom{n-2}{2} (\omega^1 \wedge \omega^2)^2  - \frac{n-3}{2} \sum_{i=1}^2 \sum_{a=3}^n (\omega^i \wedge \omega^a)^2 + \sum_{3 \leq a < b \leq n} (\omega^a \wedge \omega^b)^2.
 \end{align}
 
 Let $\rkg \geq 2$.  There is a unique Weyl line invariant under $\fu(\rkg) \subset \fso(2\rkg)$, which is maximally degenerate when $\rkg=2,3,4$.  First, identify $X = A+iB \in \fu(\rkg)$ with $\begin{pmatrix} A & -B\\ B & A \end{pmatrix} \in \fso(2\rkg)$.  On indices, define $\bar{a} := a+\rkg$ for $1 \leq a \leq \rkg$.  Consider the following three $\fu(\rkg)$-invariants in $\bigodot^2 (\bigwedge^2 (\bbR^{2\rkg})^*)$:
 \begin{align*}
 & I_1 = \sum_{1 \leq i < j \leq 2\rkg} (\omega^i \wedge \omega^j)^2, \qquad I_2 = \left( \sum_{a=1}^\rkg\omega^a \wedge \omega^{\bar{a}} \right)^2, \\
 & I_3 = \sum_{1 \leq a < b \leq \rkg} \cA_{2,3,4}\l\{ (\omega^a \wedge \omega^b)(\omega^{\bar{a}} \wedge \omega^{\bar{b}}) \r\},
 \end{align*}
 where $\cA$ denotes skew-symmetrization. The distinguished Weyl line is spanned by
 \begin{align} \label{E:W-Riem2}
 \phi = I_1 - (2\rkg-1) (I_2 + 2I_3).
 \end{align}
 When $n=2\rkg=8$, \eqref{E:W-Riem1} and \eqref{E:W-Riem2} are related via an outer automorphism of $\fso(8)$.
  
 \section{The Lorentzian case}
 \label{S:Lorentzian}
 
 As described in the Introduction, the homogeneous non-conformally flat Lorentzian metric $\metric = \metric_{\rm pp}^{(3,1)} + \metric_{\rm euc}^{(n-4,0)}$ has stabilizer subalgebras of dimension
 \[
 c_0(n) = c(n) - n = \binom{n-1}{2} + 4 -n = \binom{n-2}{2} + 2,
 \]
 and must annihilate a nonzero Weyl tensor.  The Lie algebra structure of the corresponding visible subalgebra of $\fso(1,n-1)$ will be given explicitly below.  It remains to show that $c_0(n) + 1$ is not realizable.  The case $n = 4$ was treated in detail in \cite{KT2013}, so we restrict attention to $n \geq 5$.

 \subsection{General dimensions}
 
 We will use the following theorem of Mostow \cite[Chp.\ 6, Thm.\ 1.9]{OV1994}:
 
 \begin{theorem}[Mostow \cite{Mostow1961}]
 A non-semisimple maximal subalgebra of a real semisimple Lie algebra is either parabolic or coincides with the centralizer of a pseudo-toric subalgebra.
 \end{theorem}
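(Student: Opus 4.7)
The plan is to analyze the structure of the radical $\fr$ of $\fm$ in a way forced by maximality. First, $\fm$ must be algebraic in $\fg$: its algebraic hull is a subalgebra containing $\fm$, hence by maximality either equals $\fm$ itself or equals $\fg$, and the latter is excluded since $\fm$ is proper. Algebraicity ensures that the abstract Jordan decomposition in $\operatorname{ad}_\fg$ preserves both $\fm$ and $\fr$, so $\fr$ admits the standard decomposition $\fr = \fr_s \oplus \fr_n$ of an algebraic solvable Lie algebra, where $\fr_s$ consists of ad-$\fg$-semisimple elements and $\fr_n$ of ad-$\fg$-nilpotent elements. The theorem is then established by a dichotomy according to whether $\fr_n$ vanishes.

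Case 1 ($\fr_n \neq 0$): Here $\fr_n$ is a nilpotent subalgebra of $\fg$, all of whose elements are ad-nilpotent in $\fg$, and it is an ideal of $\fm$. By the Morozov--Borel--Tits theorem, the normalizer $N_\fg(\fr_n)$ is a parabolic subalgebra of $\fg$. Since this normalizer contains $\fm$ and cannot be all of $\fg$ (otherwise $\fr_n$ would be a nonzero nilpotent ideal of the semisimple $\fg$), maximality yields $\fm = N_\fg(\fr_n)$, which is parabolic.

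Case 2 ($\fr_n = 0$): Then every element of $\fr$ is ad-$\fg$-semisimple. Since $\fr$ is solvable, Engel's theorem applied to the action $\operatorname{ad}_\fr$ shows that $\operatorname{ad}_\fr(X)$ is nilpotent for every $X \in [\fr,\fr]$; but each such element is also ad-semisimple, hence zero, so $\fr$ is abelian and therefore pseudo-toric. The normalizer $N_\fg(\fr)$ contains $\fm$; and since the corresponding group quotient $N_G(\fr)/Z_G(\fr)$ is discrete (a Weyl-type finite group), the equality $N_\fg(\fr) = \fz_\fg(\fr)$ holds at the Lie algebra level. The centralizer $\fz_\fg(\fr)$ is a proper reductive subalgebra (it is proper since $\fg$ has trivial centre), and maximality of $\fm$ gives $\fm = \fz_\fg(\fr)$, the centralizer of a pseudo-toric subalgebra.

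The main obstacle is the appeal to the Morozov--Borel--Tits theorem in Case 1. Its proof requires Jacobson--Morozov completion of each ad-nilpotent element to an $\fsl_2$-triple, analysis of the induced $\bbZ$-grading of $\fg$, and identification of the normalizer with the non-negative part of this grading (which is by definition the Lie algebra of a parabolic subgroup). Once this is granted, everything else reduces to standard manipulations with Jordan decompositions and with normalizers of toral subalgebras in semisimple Lie algebras.
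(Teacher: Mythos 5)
The paper does not actually prove this statement: it is imported verbatim from Mostow via \cite[Chp.\ 6, Thm.\ 1.9]{OV1994}, so your argument can only be judged against the literature, not against an internal proof. Your overall architecture (algebraicity of $\fm$, the dichotomy on the nilpotent part of the radical, Borel--Tits/Morozov in the first case, normalizer of a toral ideal in the second) is the right skeleton, but Case 2 has a genuine gap at exactly the point where the real theorem differs from Morozov's complex one. From ``$\fr$ is abelian and consists of $\ad_\fg$-semisimple elements'' you conclude ``therefore pseudo-toric.'' This is false: an abelian subalgebra of semisimple elements can be $\bbR$-split, e.g.\ $\fa = \bbR H$ with $\ad H$ real-diagonalizable, and then $\exp(\ad \fa) \cong \bbR_{>0}$ is not compact, so $\fa$ is not pseudo-toric in the sense used in this paper. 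What has to be shown is that the radical is \emph{elliptic}: decompose each $X \in \fr$ into commuting elliptic and hyperbolic parts $X = X_e + X_h$ (both lie in $\fg$, and every element centralizing $X$ centralizes both), and rule out $X_h \neq 0$ by maximality, since $\fm \subseteq \fz_\fg(\fr) \subseteq \fz_\fg(X_h)$ and $\fz_\fg(X_h)$ is the zero-eigenvalue Levi part of the proper parabolic $\bigoplus_{\lambda \geq 0} \fg_\lambda(X_h)$, which contains it \emph{strictly}; so $\fm$ would be properly contained in a proper subalgebra. This elliptic/hyperbolic step is the essential real-form content of Mostow's theorem and cannot be skipped.

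There are also three smaller slips you should repair. First, in Case 2 the deduction ``$\ad_\fr(X)$ nilpotent and $X$ ad-semisimple, hence zero'' only yields $\ad_\fr X = 0$, i.e.\ $X$ central in $\fr$, not $X = 0$; to kill $X \in [\fr,\fr]$ apply Lie's theorem (not Engel) to $\ad_\fg \fr$ acting on $\fg \otimes \bbC$, conclude that $\ad_\fg X$ is nilpotent, hence zero by semisimplicity, hence $X = 0$ because $\fg$ has trivial centre. Second, your algebraicity argument is incomplete: properness of $\fm$ does not by itself prevent the algebraic hull $\overline{\fm}$ from being all of $\fg$; the correct route is $[\overline{\fm},\overline{\fm}] = [\fm,\fm]$, so $\overline{\fm}$ normalizes $\fm$, and if $\overline{\fm} = \fg$ then $\fm$ is an ideal of $\fg$, hence semisimple, contradicting the hypothesis that $\fm$ is non-semisimple. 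Third, Borel--Tits does not say that $N_\fg(\fr_n)$ \emph{is} parabolic (normalizers of nilpotent subalgebras are typically much smaller); it says $N_\fg(\fr_n)$ is \emph{contained in} a proper parabolic whose nilradical contains $\fr_n$. Your conclusion in Case 1 survives because maximality then forces $\fm$ to coincide with that parabolic, but the theorem should be quoted in its correct form.
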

 
 An abelian subalgebra $\fa \subset \fg$ is {\em pseudo-toric} if the subgroup $\exp(\tad\, \fa) \subset \opn{Int}(\fg)$ is compact.  Thus, $\fa$ consists of semisimple elements.  Upon complexification, we have by Example \ref{E:ab-ss} that its centralizer is a regular reductive subalgebra.  We classified such visible subalgebras in our study of the Riemannian case, and when $6 \neq n \geq 5$, these have dimension at most $c_0(n) - 1$, so are inadmissible. The $n=6$ case is exceptional.  In Section \ref{S:real}, we will show that there is no Lorentzian version of the inclusion $\fgl(3,\bbC) \subset \fso(6,\bbC)$ analogous to $\fu(3) \subset \fso(6)$.  The only subalgebra of $\fgl(3,\bbC)$ of dimension $c_0(6) = 8$ is $\fsl(3,\bbC)$, and no Lorentzian analogue of $\fsl(3,\bbC) \subset \fso(6,\bbC)$ exists either. Indeed, it is easy to check that $\fgl(3,\bbC)$ is reconstructed uniquely from $\fsl(3,\bbC)$: it is equal to the sum of $\fsl(3,\bbC)$ and its centralizer in $\fso(6,\bbC)$. Thus, the existence of a Lorentzian analogue of $\fsl(3,\bbC)$ would imply the existence of a Lorentzian analogue of $\fgl(3,\bbC)$ via the same construction. But we know there are no Lorentzian versions of $\fgl(3,\bbC)$.

 \newcommand\pmat[1]{\begin{pmatrix} #1 \end{pmatrix}}
 
  Up to conjugacy, there is a unique parabolic subalgebra $\fp_1 \subset \fso(1,n-1)$ with $\fp_1 = \fr_0 \ltimes \fr_1$, where $\fr_0 \cong \bbR \times \fso(n-2)$ and $\fr_1 \cong \bbR^{n-2}$.  Let $\{ e_i \}_{i=0}^{n-1}$ be a basis of $\bbR^{1,n-1}$ with dual basis $\{ \omega^i \}_{i=0}^{n-1}$ and with respect to which the metric takes the form $\pmat{ 0 & 0 & 1\\ 0 & I_{n-2} & 0\\ 1 & 0 & 0 }$.  Then
 \[
 \fp_1 = \l\{ \pmat{ r & -v^T & 0 \\ 0 & R & v\\ 0 & 0 & -r} : r \in \bbR,\, R \in \fso(n-2),\, v \in \bbR^{n-2} \r\}.
 \]
 Here, $\fr_0$ and $\fr_1$ correspond to the $(r,R)$ and $v$ terms, respectively.  The $\fr_0$-action on $\fr_1$ induced by the Lie bracket can be written symbolically as $(r,R) \cdot v = rv + Rv$.  Writing $e_i^j = e_i \otimes \omega^j$, we have
 \begin{itemize}
 \item $\bbR$ spanned by $e_0^0 - e_{n-1}^{n-1}$;
 \item $\fso(n-2)$ spanned by $e_i^j - e_j^i$ for $1 \leq i < j \leq n-2$;
 \item $\fr_1$ spanned by $e_0 \ot \omega^i - e_i \ot \omega^{n-1}$ for $1 \leq i \leq n-2$.
 \end{itemize}

 Since
 $\dim(\fp_1) = c_0(n) + n-3 \geq c_0(n) + 2$ for $n \geq 5$, 
 then $\fp_1$ is not visible.  Let $\fs \subset \fp_1$ be any visible subalgebra. Consider the natural projection
$\pi \colon \fs \to \fr_0$ induced by the projection $\fp_1\to \fp_1/\fr_1=\fr_0$.
 Two cases arise:
 \Ben
 \item $\fso(n-2) \subset \pi(\fs)$: We have $\ker(\pi) = \fs\cap \fr_1$ and is $\pi(\fs)$-invariant.  Since $\fso(n-2) \subset \pi(\fs)$, then either: (i) $\fs\cap \fr_1 = 0$, so $\dim(\fs) < c_0(n)$; or (ii) $\fs\cap \fr_1 = \fr_1$, so $\fs = \pi(\fs) \ltimes \fr_1$ and $\dim(\fs) \geq c_0(n) + 2$ for $n \geq 6$.  Such $\fs$ are inadmissible.

 \item $\fso(n-2) \not\subset \pi(\fs)$: For $n \geq 7$ or $n=5$, any proper subalgebra of $\fso(n-2)$ (which, by compactness, is reductive) has dimension at most $\dim(\fso(n-3))$.  In these cases, $\dim(\fs) = \dim(\pi(\fs)) + \dim(\fs \cap \fr_1) \leq 1 + \binom{n-3}{2} + n-2 = c_0(n)$.  Equality holds only when $\pi(\fs) \cong \bbR \op \fso(n-3)$ and $\fs \supset \fr_1$. Therefore, $\fs$ is conjugate to:
 \begin{align} \label{E:lor-s}
 \fs(n) = (\bbR \op \fso(n-3)) \ltimes \bbR^{n-2}, \qquad n \geq 7 \qbox{ or } n=5.
 \end{align}
 \Een
  
For $n \geq 4$, we note that $\fs(n)$ stabilizes the degenerate (but not totally null) 2-plane spanned by $\{ e_0, e_1 \}$ which is tangent to the null cone in $\bbR^{1,n-1}$.
  
 \begin{theorem} \label{T:pp-W} For $n \geq 4$, there is a unique Weyl line preserved by  $\fs(n) = (\bbR \op \fso(n-3)) \ltimes \bbR^{n-2} \subset \fso(1,n-1)$.  Letting $\fso(n-3)$ act on $\tspan\{ e_i \}_{i=2}^{n-2}$, we have $\fs(n) = \fco(\phi)$, where
 \begin{align} \label{E:W-Lor}
 \phi = -(n-3)(\omega^1 \wedge \omega^{n-1})^2 + \sum_{i=2}^{n-2} (\omega^i \wedge \omega^{n-1})^2.
 \end{align}
 \end{theorem}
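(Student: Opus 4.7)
The plan is to exploit the $|1|$-grading of $\fg = \fso(1,n-1)$ induced by the parabolic $\fp_1$. With grading element $H := e_0^0 - e_{n-1}^{n-1}$, we get $\fg = \fg_{-1} \oplus \fg_0 \oplus \fg_1$, where $\fg_0 = \fr_0 = \bbR H \oplus \fso(n-2)$ and $\fg_1 = \fr_1$; on covectors, $H\omega^{n-1} = \omega^{n-1}$, $H\omega^i = 0$ for $1 \leq i \leq n-2$, $H\omega^0 = -\omega^0$, inducing a grading $\Weyl = \bigoplus_{k=-2}^{2}\Weyl^{(k)}$.

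The first observation is that if $\phi$ is preserved by $\fs(n)$ up to scale, then $\fr_1\cdot\phi = 0$: each $X \in \fr_1$ is ad-nilpotent in $\fg$, so it acts nilpotently on $\Weyl$, and $X\phi = c(X)\phi$ forces $c(X) = 0$. Since $\Weyl_\bbC$ is $\fso(n,\bbC)$-irreducible for $n \geq 5$, the standard highest-weight principle gives $\Weyl^{\fr_1} = \Weyl^{(2)}$. Explicitly, $\Weyl^{(2)}$ consists of trace-free linear combinations of $(\omega^{n-1}\wedge\omega^a)(\omega^{n-1}\wedge\omega^b)$ with $a,b \in \{1,\ldots,n-2\}$, and the assignment $\omega^a\omega^b \leftrightarrow (\omega^{n-1}\wedge\omega^a)(\omega^{n-1}\wedge\omega^b)$ identifies $\Weyl^{(2)}$ with $\Sym^2_0(V^*)$ as $\fso(n-2)$-modules, where $V := \tspan\{e_1,\ldots,e_{n-2}\}$.

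Next I restrict to $\fso(n-3)$, which fixes $e_1$ and acts by its standard representation on $W := \tspan\{e_2,\ldots,e_{n-2}\}$. Decomposing $V = \bbR e_1 \oplus W$ yields $\Sym^2 V^* = \bbR(\omega^1)^2 \oplus (\omega^1\cdot W^*) \oplus \Sym^2 W^*$, and the $\fso(n-3)$-invariants form a 2-plane spanned by $(\omega^1)^2$ and $\sum_{i=2}^{n-2}(\omega^i)^2$ (since $W^*$ is $\fso(n-3)$-irreducible without invariants and $\Sym^2 W^*$ has the unique invariant given by its trace). Imposing trace-freeness against the $V$-metric cuts this to the single line generated by $-(n-3)(\omega^1)^2 + \sum_{i=2}^{n-2}(\omega^i)^2$, whose image in $\Weyl^{(2)}$ is precisely \eqref{E:W-Lor}.

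Finally, to verify $\fco(\phi) = \fs(n)$, decompose any $X \in \fco(\phi)$ as $X = X_{-1} + X_0 + X_1$. Since $\phi$ has pure $H$-weight $+2$, the identity $X\phi = c\phi$ separates by weight into $X_{-1}\phi = 0$, $X_0\phi = c\phi$, $X_1\phi = 0$. The relation for $X_1$ is automatic on $\Weyl^{(2)}$; a short Leibniz-rule computation using $Y\omega^{n-1} = -\omega^i$, $Y\omega^i = \omega^0$ for the generators $Y = e_{n-1}\otimes\omega^i - e_i\otimes\omega^0$ of $\fg_{-1}$ shows $Y\phi$ contains a nonvanishing $(\omega^a\wedge\omega^{n-1})(\omega^0\wedge\omega^{n-1})$-type term, so $X_{-1} = 0$; for $X_0$, $H$ scales $\phi$ by $+2$ (hence $H \in \fco(\phi)$) while the $\fso(n-2)$-stabilizer of the trace-free symmetric 2-tensor above coincides with the isotropy of $e_1$, i.e.\ $\fso(n-3)$. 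Assembling yields $\fco(\phi) = \bbR H \oplus \fso(n-3) \oplus \fr_1 = \fs(n)$. The only conceptual ingredient is the identification $\Weyl^{\fr_1} = \Weyl^{(2)}$ (irreducibility plus the highest-weight principle); the remaining steps are direct calculations.
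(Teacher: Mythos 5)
For $n \geq 5$ your argument is correct and takes a genuinely different (and in some respects more self-contained) route than the paper. The paper computes $\Weyl^{\fr_1}\otimes\bbC = H^0(\fr_1,\Weyl)\otimes\bbC$ via Kostant's Bott--Borel--Weil theorem and then branches the resulting $\fso(n-2,\bbC)$-module to $\fso(n-3,\bbC)$, reading off a unique trivial summand; you instead use the $|1|$-grading determined by $\fp_1$ together with the elementary ``highest weight space for the parabolic'' principle (irreducibility of $\Weyl_\bbC$ for $n\geq 5$) to identify $\Weyl^{\fr_1}=\Weyl^{(2)}\cong\Sym^2_0\bigl((\bbR^{n-2})^*\bigr)$ explicitly, and then do the $\fso(n-3)$-branching by hand. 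A further plus of your approach is that the graded decomposition $X=X_{-1}+X_0+X_1$ gives a direct proof that $\fco(\phi)$ is no larger than $\fs(n)$ (the nonvanishing $(\omega^0\wedge\omega^{n-1})(\omega^j\wedge\omega^{n-1})$-terms in $X_{-1}\phi$ do kill $X_{-1}$, as you claim), a point the paper leaves to its surrounding classification rather than verifying inside the proof. One small gloss: for the uniqueness step and for the $X_0$-analysis you silently replace ``preserves the line'' by ``annihilates'' for the compact factor; this is automatic when $\fso(n-3)$ (resp.\ $\fso(n-2)$) is semisimple, but for $n=5$ the factor $\fso(n-3)=\fso(2)$ is abelian and you need the compactness argument (purely imaginary eigenvalues on a real module, as the paper invokes) to exclude a nonzero real character.

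The genuine gap is $n=4$, which the statement includes. Your two key steps both fail there: $\Weyl_\bbC$ is not irreducible (it splits into two pieces), so the identification $\Weyl^{\fr_1}=\Weyl^{(2)}$ cannot be obtained from the highest-weight principle as stated, and in fact $\Weyl^{(2)}\cong\Sym^2_0(\bbR^2)$ is $2$-dimensional while $\fso(n-3)=\fso(1)=0$, so your branching argument produces a $2$-plane of $\fs(4)$-invariant lines rather than a single line; consequently the claim $\fs(4)=\fco(\phi)$ needs a separate verification that your scheme does not supply (the $X_0$-step, ``the $\fso(n-2)$-stabilizer coincides with $\fso(n-3)$'', must then be checked directly for the specific $\phi$ of \eqref{E:W-Lor}, since the generic stabilizer computation no longer singles it out). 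The paper treats $n=4$ by a separate argument, quoting the classification of maximal annihilators in $\fco(1,3)$ from \cite{KT2013} and checking that $\fs(4)=\fco(\phi)$ is $3$-dimensional. You should either restrict your proof to $n\geq 5$ and supply an independent $n=4$ argument of this kind, or add the explicit low-dimensional computation showing that no element of $\fso(2)\subset\fg_0$ outside $\bbR H$ preserves the particular line \eqref{E:W-Lor} up to scale.
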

 
 \begin{proof} Let $n \geq 5$.  Since $\fso(n-3)$ is compact and $\fr_1 = \bbR^{n-2}$ is nilpotent in $\fr = \fso(1,n-1)$, then if $\fso(n-3) \ltimes \fr_1$ preserves a nonzero $\phi \in \Weyl$ up to scale, it must annihilate $\phi$.  The space of annihilated elements $\Weyl^{\fr_1}$ is isomorphic to the zeroth cohomology group $H^0(\fr_1,\Weyl)$, whose complexification $H^0((\fr_1)_\bbC,\Weyl_\bbC) = H^0(\fr_1,\Weyl) \ot \bbC$ is easily computed via Kostant's Bott--Borel--Weil theorem \cite{Kos1961, BE1989, CS2009}.
 \[
 \begin{array}{|c|c|c|c|} \hline
 && \multicolumn{2}{c|}{\Weyl^{\fr_1} \ot \bbC}\\ \hline
 n & \mbox{$\fso(n,\bbC)$-weight of $\Weyl_\bbC$} & \mbox{$\fso(n-2,\bbC)$-module} & \mbox{Branching to $\fso(n-3,\bbC)$}\\ \hline\hline
 \geq 7 & 2\lambda_2 & \bigodot^2_0(\bbC^{n-2}) & \bigodot^2_0 (\bbC^{n-3}) \op \bbC^{n-3} \op \bbC\\
 6 & 2\lambda_2 + 2\lambda_3 & \bbC^3 \otimes \bbC^3 & \bigodot^2_0(\bbC^3) \op \bbC^3 \op \bbC\\
 5 & 4\lambda_2 & \bigodot^2_0(\bbC^3) & \bigodot^2_0(\bbC^2) \op \bbC^2 \op \bbC\\ \hline
 \end{array}
 \]
 (For $n=6$, we use $\fso(4,\bbC) \cong \fso(3,\bbC)_L \times \fso(3,\bbC)_R$, and branch $\bbC^3 \otimes \bbC^3$ to $\fso(3,\bbC)$, embedded reducibly in $\fso(4,\bbC)$.)
 Thus, the subspace of $\Weyl$ annihilated by $\fso(n-3) \ltimes \fr_1$ is 1-dimensional.
  For $\phi$ as in \eqref{E:W-Lor}, we verify $\phi \in \Weyl$, $\fso(n-3) \ltimes \fr_1 \subset \fann(\phi)$, and the $\bbR$-factor scales $\phi$.
  
 For $n=4$, we saw in \cite[Sec.~ 5.1.3]{KT2013} that all maximal annihilators in $\fco(1,3)$ are conjugate and 3-dimensional, so this also holds for all maximal visible subalgebras of $\fso(1,3)$.  For $\phi$ as in \eqref{E:W-Lor}, $\fs(4) = \fco(\phi)$ is 3-dimensional, so this proves the result.
  \end{proof}

 Let us also record the following:
 
 \begin{prop} \label{P:so(n-2)}
 For $n \geq 5$, there is a unique Weyl line invariant under $\fso(n-2) \subset \fso(1,n-1)$, spanned by
 \begin{align*}
 \phi &=\binom{n-2}{2} (\omega^0 \wedge \omega^{n-1})^2 +  (n-3)\sum_{i=1}^{n-2} (\omega^0 \wedge \omega^i)(\omega^{n-1} \wedge \omega^i) \\
 &\qquad - \sum_{1 \leq i < j \leq n-2} (\omega^i \wedge \omega^j)^2.
 \end{align*}
 The subalgebra $\fso(n-2) \ltimes \fr_1$ is not contained in a visible subalgebra.
 \end{prop}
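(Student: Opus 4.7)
I split the proof into three parts: uniqueness of the $\fso(n-2)$-invariant line in $\Weyl$, verification that the displayed $\phi$ spans it, and a contradiction argument for the non-extension statement. For uniqueness, after complexification the problem is to count trivial summands of $\Weyl_\bbC$ under the Levi factor $\fso(n-2,\bbC) \subset (\fp_1)_\bbC$. Writing $V^* \cong \bbC^{n-2}$ for the standard $\fso(n-2,\bbC)$-module, we have
\[
\bigwedge^2(\bbC^n)^* \cong \bigwedge^2 V^* \oplus V^* \oplus V^* \oplus \bbC,
\]
the two copies of $V^*$ coming from $\omega^0 \wedge V^*$ and $\omega^{n-1} \wedge V^*$, and the trivial summand from $\bbC \cdot \omega^0 \wedge \omega^{n-1}$. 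Applying the first fundamental theorem of invariant theory to $\bigodot^2$ of this decomposition gives a short explicit list of $\fso(n-2,\bbC)$-invariants, and imposing the first Bianchi identity together with total trace-freeness cuts this list down to a one-dimensional space. With uniqueness in hand, the verification that the displayed $\phi$ spans this line is routine: $\phi$ is manifestly $\fso(n-2)$-invariant, and the coefficients $\binom{n-2}{2}$, $n-3$, $-1$ are precisely tuned so that all traces vanish and the Bianchi identity holds.

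\textbf{Non-extension.} Suppose for contradiction that some nonzero $\psi \in \Weyl$ is preserved up to scale by $\fso(n-2) \ltimes \fr_1$. For each $j \in \{1, \ldots, n-2\}$, let $\fso(n-3)_j \subset \fso(n-2)$ denote the stabilizer of $e_j$; then $\fso(n-3)_j \ltimes \fr_1$ is conjugate in $\fso(1, n-1)$ (via a rotation in $\fso(n-2)$ interchanging $e_1$ and $e_j$) to the subalgebra appearing in Theorem \ref{T:pp-W}, and it still preserves $\bbR\psi$. The uniqueness in Theorem \ref{T:pp-W} therefore forces
\[
\psi \in \bbR \cdot \phi^{(j)}, \qquad \phi^{(j)} := -(n-3)(\omega^j \wedge \omega^{n-1})^2 + \sum_{i \neq j} (\omega^i \wedge \omega^{n-1})^2,
\]
for every $j$. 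Matching the coefficients of $(\omega^1 \wedge \omega^{n-1})^2$ and $(\omega^2 \wedge \omega^{n-1})^2$ in $c_1 \phi^{(1)} = c_2 \phi^{(2)}$ yields $c_1 \bigl((n-3)^2 - 1\bigr) = 0$; since $n \geq 5$ this forces $c_1 = 0$ and hence $\psi = 0$, a contradiction.

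\textbf{Main obstacle.} The delicate step is tracking which of the $\fso(n-2,\bbC)$-invariants in $\bigodot^2 \bigwedge^2(\bbC^n)^*$ survive the Bianchi and trace conditions; the enumeration is cleanest for $n \geq 7$, but when $n = 5$ or $6$ some summands collapse ($\bigwedge^2 V^* \cong V^*$ when $n-2 = 3$, and $\bigwedge^2 V^*$ splits when $n-2 = 4$), so the count must be redone in those cases. As a cross-check, one can grade $\Weyl_\bbC$ by the central $\bbR \subset \fr_0$, use the Kostant computation from the proof of Theorem \ref{T:pp-W} to show that the top-degree piece carries no trivial $\fso(n-2,\bbC)$-summand, and invoke self-duality of $\Weyl$ to propagate this to the bottom, leaving only the degree-zero component to analyze directly.
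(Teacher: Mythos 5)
Your proposal is correct, and its second half takes a genuinely different route from the paper. For the uniqueness claim the paper simply branches $\Weyl_\bbC$ from $\fso(n,\bbC)$ to $\fso(n-2,\bbC)$ and reads off a single trivial summand from Appendix \ref{A:branch}(b), where the degenerations at $n=5,6$ are worked out explicitly; your plan --- count $\fso(n-2,\bbC)$-invariants in $\bigodot^2\bigwedge^2(\bbC^n)^*$ via the first fundamental theorem and then impose trace-freeness and the Bianchi identity --- is a sound substitute, but as written it remains a sketch: the list of invariants (five when $n-2\geq 5$, more in the $n=5,6$ cases you flag) and the reduction to a one-dimensional surviving space are exactly the computations the paper delegates to its appendix, and you do not carry them out. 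For the non-extension claim the paper argues directly: compactness forces $\fso(n-2)$ to annihilate any $\psi$ it preserves up to scale, uniqueness then gives $\psi\propto\phi$, and one verifies by hand that $\fr_1$ does not scale $\phi$. Your alternative --- apply the uniqueness attached to $\fso(n-3)_j\ltimes\fr_1$ for two different values of $j$ and note that $\phi^{(1)}$, $\phi^{(2)}$ are not proportional, via $c_1\bigl((n-3)^2-1\bigr)=0$ for $n\geq 5$ --- is valid and has the advantage of avoiding any computation of the $\fr_1$-action on $\phi$. One precision point: what you actually invoke is not the literal statement of Theorem \ref{T:pp-W} (which concerns $\fs(n)$, containing the grading element $\bbR$, whereas your subalgebras $\fso(n-3)_j\ltimes\fr_1$ do not), but two facts established inside its proof for $n\geq 5$: preservation up to scale by the compact-plus-nilpotent subalgebra $\fso(n-3)\ltimes\fr_1$ forces annihilation, and the annihilated subspace of $\Weyl$ is exactly one-dimensional; combined with conjugation by a rotation in $\fso(n-2)$ (which normalizes $\fr_1$ and carries $\phi^{(1)}$ to $\phi^{(j)}$), this closes your argument, so the issue is one of citation rather than of mathematics.
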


 \begin{proof}
 Branching $\Weyl_\bbC$ from $\fso(n,\bbC)$ to $\fso(n-2,\bbC)$ (see Appendix \ref{A:branch}) yields a 1-dimensional trivial factor.  For $\phi$ above, we verify that $\phi \in \Weyl$, and $\fso(n-2) \subset \fann(\phi)$, but $\fr_1$ does not scale $\phi$.
 \end{proof}

 \subsection{Low dimensions}

 For $n=5$, aside from $\fs(5)$, it remains to consider the subalgebra $\fso(3) \ltimes \fr_1 \subset \fp_1$ which has dimension 6.  By Proposition \ref{P:so(n-2)}, this is not a visible subalgebra.
 
For $n=6$, aside from $\fs(6)$, it remains to study subalgebras $\fk \subset \fp_1$ of dimension $\ge c_0(6) = 8$. We can assume that $\fso(4) \not\subset \fk$, as otherwise $\fk$ would necessarily coincide with $\fs(6)$. Consider the natural projection $\pi\colon \fp_1\to\fr_0 = \fso(4)\times\bbR$. Since $\dim(\fr_0) = 7$, we must have $\fk \cap \fr_1 \neq 0$. Since $\fr_1$ is commutative, $\fk\cap\fr_1$ is invariant with respect to the natural action of $\pi(\fk)$ on $\fr_1$. The dimension count immediately implies that $\fk\cap\fr_1=\fr_1$, i.e. $\fk\supset \fr_1=\bbR^4$. Indeed, if $\dim(\fk\cap\fr_1) \leq 3$, then $\pi(\fk)$  lies in the stabilizer of $\fk\cap\fr_1$ and is at most 4-dimensional (since $\fso(3) \subset \fso(4)$ is the largest proper subalgebra acting reducibly).  Then $\dim( \fk ) = \dim( \pi(\fk) ) +\dim(\fk\cap\fr_1)\le 7$.  Thus, $\fk$ is completely determined by $\pi(\fk)\subset \fr_0$, and it remains to study the following subalgebras $\fk\subset \fp_1$:
  \Ben
  \item $(\bbR \op \fso(3)_R) \ltimes \bbR^4$ and $(\fso(2)_L \times \fso(3)_R) \ltimes \bbR^4$ (both dimension 8),
  \item $(\bbR \op (\fso(2)_L \times \fso(3)_R)) \ltimes \bbR^4$ (dimension 9),
  \Een
  and similar subalgebras involving $\fso(3)_L$.
 Here we used the isomorphism $\fso(4) \cong \fso(3)_L \times \fso(3)_R$.

  \begin{prop}
 If $\phi \in \Weyl$ is preserved up to scale by $\fso(3)_R \ltimes \bbR^4 \subset \fso(1,5)$, then $\phi = 0$.  
  \end{prop}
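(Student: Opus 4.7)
My approach would combine two simple structural reductions with the Kostant branching already carried out in the proof of Theorem \ref{T:pp-W}.

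First, I would reduce ``preserved up to scale'' to ``annihilated'' on each of the two pieces of $\fso(3)_R \ltimes \bbR^4$. Since $\fr_1 \cong \bbR^4$ is the nilpotent radical of $\fp_1$, it acts on every $\fp_1$-module (in particular on $\Weyl$) by nilpotent endomorphisms; a nilpotent operator can scale a line only by the scalar zero, so $\fr_1$ must annihilate $\phi$, i.e.\ $\phi \in \Weyl^{\fr_1}$. Analogously, $\fso(3)_R$ is semisimple and therefore admits no nontrivial one-dimensional characters, so it too must annihilate $\phi$. It thus suffices to show that the $\fso(3)_R$-invariant subspace of $\Weyl^{\fr_1}$ is trivial.

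Next I would quote the $n = 6$ row of the table proved in Theorem \ref{T:pp-W}: after complexification,
\[
\Weyl^{\fr_1} \otimes_\bbR \bbC \;\cong\; \bbC^3 \otimes \bbC^3
\]
as an $\fso(4,\bbC) \cong \fso(3,\bbC)_L \times \fso(3,\bbC)_R$-module, with the two tensor factors the defining $3$-dimensional representations of $\fso(3,\bbC)_L$ and $\fso(3,\bbC)_R$, respectively. Restricting this outer tensor product to the second factor alone makes the first factor trivial of multiplicity three, so
\[
\bbC^3 \otimes \bbC^3 \;\cong\; \bbC^3 \oplus \bbC^3 \oplus \bbC^3
\]
as $\fso(3,\bbC)_R$-modules, which contains no trivial summand. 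Therefore the $\fso(3)_R$-invariants of $\Weyl^{\fr_1}$ vanish, forcing $\phi = 0$.

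I do not expect a genuine obstacle, since the argument is essentially bookkeeping on top of the branching already established. The only point requiring attention is the identification of the subalgebra $\fso(3)_R \subset \fso(4) \subset \fp_1/\fr_1$ with one specific simple ideal of $\fso(3,\bbC)_L \times \fso(3,\bbC)_R$, but this is fixed by the conventions in force throughout Section \ref{S:Lorentzian} and does not affect the conclusion since either factor yields the same outcome.
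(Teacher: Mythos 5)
Your argument is correct and follows essentially the same route as the paper: reduce ``preserved up to scale'' to ``annihilated'' (the paper uses nilpotency of $\fr_1$ and compactness of the reductive factor in the proof of Theorem \ref{T:pp-W}, you use nilpotency plus semisimplicity, which is equally valid), then invoke the Kostant computation $\Weyl^{\fr_1}\otimes\bbC \cong \bbC^3\otimes\bbC^3$ and branch to $\fso(3)_R$ to get three copies of the standard representation and hence no trivial summand. No gaps; this is the paper's proof with the scale-to-annihilation step made slightly more explicit.
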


 \begin{proof}  From the proof of Theorem \ref{T:pp-W}, in dimension 6 we have $\Weyl^{\fr_1} \otimes \bbC \cong H^0(\fr_1,\Weyl) \otimes \bbC \cong \bbC^3 \otimes \bbC^3$ as a module for $\fso(4,\bbC) \cong \fso(3,\bbC)_L \times \fso(3,\bbC)_R$ modules. Thus, $\Weyl^{\fr_1} \cong \bbR^3 \ot \bbR^3$ as a module for $\fso(4) \cong \fso(3)_L \times \fso(3)_R$.
 Branching this representation to $\fso(3)_R$ yields three copies of the standard representation $\bbR^3$.  Hence, no trivial factors exist, which proves the claim.
 \end{proof}
 
 A similar statement holds for $\fso(3)_L \ltimes \bbR^4$, so $\fs(6)$ is the maximal visible subalgebra in $\fp_1$.

\section{Real forms of semisimple Lie algebras and their reductive subalgebras}
\label{S:real}

In this section we describe how to classify all real forms of a given reductive subalgebra in a complex simple Lie algebra. Our motivating example is the family of real forms for the subalgebra $\fgl(\rkg,\bbC) \subset \fso(2\rkg,\bbC)$, $\rkg \geq 2$. The definitions and main results are formulated for arbitrary semisimple Lie algebras, but for simplicity all examples are given only for classical Lie algebras. 

\subsection{Real forms, anti-involutions and involutions}
Let $\fg$ be an arbitrary complex Lie algebra with underlying real Lie algebra $\fg_\bbR$. Recall that a real form of $\fg$ is a real subalgebra $\fu \subset \fg_\bbR$ such that $\fg_\bbR = \fu \oplus i\fu$ (direct sum over reals). Each real form $\fu$ defines a unique \emph{anti-involution} $\sigma\colon \fg\to\fg$, where $\sigma(x+iy)=x-iy$, for all $x,y\in\fu$. Conversely, each anti-involution $\sigma$ of $\fg$ defines the real form $\fg_{\sigma}=\{x\in\fg\mid \sigma(x)=x\}$. Thus, describing all real forms of a given complex Lie algebra $\fg$ is equivalent to describing all its anti-involutions $\sigma$.

Suppose now that $\fg$ is complex semisimple. A real form $\fu$ is \emph{compact}, if the Killing form of $\fu$ is negative definite. We call an anti-involution $\tau$ of $\fg$ \emph{compact}, if the corresponding real form $\fg_{\tau}$ is compact. It is well-known \cite{Helgason1978,Knapp2002}, that compact involutions exist for any complex semisimple Lie algebra, and they are all conjugate by the group $\operatorname{Int}(\fg)$ of inner automorphisms of $\fg$.
\begin{center}
\begin{table}[h]
\begin{tabular}{|c|c|c|}
\hline
Lie algebra & Compact anti-involution & Real form \\\hline
\hline
$\fsl(n,\bbC)$ & $X \mapsto -\overline X^{t}$ & $\fsu(n)$ \\
$\fso(n,\bbC)$ & $X\mapsto \overline X$ & $\fso(n)$ \\
$\fsp(2\rkg,\bbC)$ & $X \mapsto -\overline X^{t}$ & $\fsp(\rkg)$ \\
\hline
\end{tabular}
\caption{Compact anti-involutions for classical complex Lie algebras}
\label{F:anti-invol-classical}
\end{table}
\end{center}

Let $\sigma$ be an arbitrary anti-involution of $\fg$.  It is known~\cite[Theorem 6.16]{Knapp2002} that we can always find a compact anti-involution $\tau$ that commutes with $\sigma$, in which case $\theta=\sigma\tau=\tau\sigma$ is an involution. Conversely, if $\theta$ is an arbitrary involution, then we can always find a compact anti-involution $\tau$ commuting with it, and then $\sigma =\theta\tau=\tau\theta$ is clearly an anti-involution. This defines a one-to-one correspondence between the $\operatorname{Aut}(\fg)$-conjugacy classes of anti-involutions and involutions of $\fg$. 

In particular, $\operatorname{Aut}(\fg)$-conjugacy classes of involutions classify real forms of $\fg$. The reason why we want to deal with involutions instead of anti-involutions is obvious: involutions form a much easier class of objects, which can be explicitly described in matrix notation for classical Lie algebras or in terms of root systems in both classical  and exceptional cases.

 Any involution $\theta$ is uniquely determined by its stationary subalgebra $\fg_{\theta}=\{x\in\fg\mid \theta(x)=x\}$. 

Let us explicitly describe all involutions of classical Lie algebras.  For $\fsl(n,\bbC)$, they are:
\begin{itemize}
\item conjugations by matrices $A\in \GL(n,\bbC)$ with $A^2=1$ (which implies that $A$ is diagonalizable with eigenvalues $\pm1$);
\item minus transposition with respect to an arbitrary non-degenerate symmetric or skew-symmetric bilinear form.
\end{itemize}
All involutions of $\fso(n,\bbC)$ for $n\ne 8$ and all involutions of $\fsp(2\rkg,\bbC)$, $\rkg\ge 1$ are conjugations by matrices $A$ from $\operatorname{O}(n,\bbC)$ and $\Sp(2\rkg,\bbC)$ respectively with $A^2=\pm 1$. We exclude here the case of $\fso(8,\bbC)$, as there are other involutions due to the additional symmetry of the root system $D_4$.

Define the following notation:
\[
I_{p,q} = \begin{pmatrix} I_p & 0 \\ 0 & -I_q \end{pmatrix}, \quad
J_k = \begin{pmatrix} 0 & I_k \\ -I_k & 0 \end{pmatrix}, \quad
K_{p,q} = \begin{pmatrix} I_{p,q} & 0 \\ 0 & I_{p,q} \end{pmatrix}.
\]
Also, denote by $\fu^*(\rkg,\bbH)$ the set of skew-Hermitian $\rkg\times \rkg$ matrices over the quaternions $\bbH$ (also denoted by $\fso^*(2\rkg)$ or $\fso(\rkg,\bbH)$ in the literature).
In Table \ref{F:g-real}, we list representatives of $\operatorname{Aut}(\fg)$-conjugacy classes of involutions for classical Lie algebras $\fg$ \cite[Thm.~1.4, Sec.~4.1]{OV1994}. The representatives are chosen in such a way that they commute with the compact anti-involutions given in Table \ref{F:anti-invol-classical}.
\begin{center}
\begin{table}[h]
\begin{tabular}{|l|l|c|c|}
\hline
\multicolumn{1}{|c|}{Lie algebra $\fg$} & \multicolumn{1}{|c|}{Involution $\theta$} & Stabilizer $\fg_{\theta}$ & Real form of $\fg$\\
\hline\hline
$\fsl(n,\bbC)$, $n=p+q$ & $ X \mapsto I_{p,q}XI_{p,q}$ & $\fsl(p,\bbC)+\fsl(q,\bbC)+\bbC$ & $\fsu(p,q)$ \\ 
$\fsl(n,\bbC)$ & $ X \mapsto -X^{t}$ & $\fso(n,\bbC)$ & $\fsl(n,\bbR)$ \\ 
$\fsl(n,\bbC)$, $n=2k$ & $X\mapsto -J_k X^{t} J_k^{-1}$ & $\fsp(n,\bbC)$ & $\fsl(k,\bbH)$ \\
\hline
$\fso(n,\bbC)$, $n=p+q$ & $ X \mapsto I_{p,q}XI_{p,q}$ & $\fso(p,\bbC)+\fso(q,\bbC)$ & $\fso(p,q)$ \\ 
$\fso(n,\bbC)$, $n=2\rkg$ & $X\mapsto J_\rkg X J_\rkg^{-1}$ & $\fgl(\rkg,\bbC)$ & $\fu^*(\rkg,\bbH)$\\
\hline
$\fsp(2\rkg,\bbC)$, $\rkg=p+q$ & $ X \mapsto K_{p,q}XK_{p,q}$ & $\fso(p,\bbC)+\fso(q,\bbC)$ & $\fsp(p,q)$ \\ 
$\fsp(2\rkg,\bbC)$ & $X\mapsto J_\rkg X J_\rkg^{-1}$ & $\fgl(\rkg,\bbC)$ &  $\fsp(2\rkg,\bbR)$ \\
\hline
\end{tabular}
\label{F:g-real}
\caption{Involutions and real forms of classical complex Lie algebras}
\end{table}
\end{center}

\subsection{Real forms of self-normalizing subalgebras}

 A subalgebra $\fk \subset \fg$ is {\em self-normalizing} if it coincides with its own normalizer.  A {\em real form} of $\fk\subset\fg$ refers to an anti-involution of $\fg$ that preserves $\fk$.  The above correspondence between anti-involutions and involutions of semisimple Lie algebras is valid also for pairs of Lie algebras $(\fg,\fk)$, where $\fg$ is semisimple and $\fk$ is its self-normalizing reductive subalgebra \cite{Kom1990}. Namely, if $\sigma$ is an arbitrary real form of $(\fg,\fk)$, then there exists a compact anti-involution $\tau$ of this pair that commutes with $\sigma$ and $\theta=\tau\sigma$ is an involution of $\fg$ preserving $\fk$. Vice versa, if $\theta$ is an arbitrary involution preserving $\fk$, there exists a compact anti-involution that commutes with $\theta$ and preserves $\fk$. Moreover, this defines a one-to-one correspondence between conjugacy classes of involutions and anti-involutions of $(\fg,\fk)$ considered up to $\operatorname{Aut}(\fg,\fk)$, which is the subgroup of $\operatorname{Aut}(\fg)$ which stabilizes $\fk$. 

As an example, let us describe all real forms of the pair $(\fso(2\rkg,\bbC),\fgl(\rkg,\bbC))$. Here, $\fk = \fgl(\rkg,\bbC)$ is defined as the subalgebra of $\fg = \fso(2\rkg,\bbC)$ which stabilizes the decomposition $\bbC^{2\rkg} = V\op V^*$ into a direct sum of a pair of isotropic subspaces.  This notation is due to the fact that one of these subspaces is equivalent to the standard representation of $\fgl(\rkg,\bbC)$, while the other one is equivalent to its dual representation.

In the generic case when $\rkg\ge 5$, there are four types of involutions of $\fso(2\rkg,\bbC)$ preserving $\fgl(\rkg,\bbC)$. They all have the form $X\mapsto AXA^{-1}$, where:
 \begin{table}[h]
 \begin{tabular}{|c|c|c|c|c|}\hline
 & $A$ & $A^2$ & $V,V^*$ & \begin{tabular}{c} Real form of \\ $\fgl(\rkg,\bbC) \subset \fso(2\rkg,\bbC)$ \end{tabular}\\ \hline\hline
 (a) & $\begin{pmatrix} I_{p,q} & 0 \\ 0 & I_{p,q} \end{pmatrix}$, $\rkg=p+q$ & $1$ & both preserved & $\fu(p,q) \subset \fso(2p,2q)$\\
 (b) & $\begin{pmatrix} iI_{p,q} & 0\\ 0 & -iI_{p,q} \end{pmatrix}$, $\rkg=p+q$ & $-1$ & both preserved & $\fu(p,q) \subset \fu^*(\rkg,\bbH)$\\
 (c) & $\begin{pmatrix} 0 & E_\rkg\\ E_\rkg & 0 \end{pmatrix}$ & $1$ & interchanged  & $\fgl(\rkg,\bbR) \subset \fso(\rkg,\rkg)$\\
 (d) & $\begin{pmatrix} 0 & J_k \\ J_k & 0 \end{pmatrix}$, $\rkg=2k$ & $-1$ & interchanged & $\fgl(k,\bbH) \subset \fu^*(2k,\bbH)$ \\
\hline
 \end{tabular}
 \caption{Involutions $X \mapsto AXA^{-1}$ of $\fso(2\rkg,\bbC)$ preserving $\fgl(\rkg,\bbC)$ when $\rkg \geq 5$}
 \end{table}
 
\begin{example}
Suppose that $A$ interchanges $V$ and $V^*$ and suppose that $A^2 = -1$.  Then in a basis adapted to $(V,V^*)$, $A = \begin{pmatrix} 0 & M\\ -M^{-1} & 0 \end{pmatrix}$, where $M^t = - M$.  Let $\varphi \in \operatorname{Aut}(\fg,\fk)$, so $\varphi(X) = BXB^{-1}$.  If $B$ preserves both $V$ and $V^*$, then it is of the form $B = \begin{pmatrix} \alpha & 0\\ 0 & (\alpha^{-1})^{t} \end{pmatrix}$, where $\alpha \in \GL(n,\bbC)$.  Then $\varphi \theta \varphi^{-1}(X)$ is conjugation of $X$ by $\begin{pmatrix} 0 & \alpha M \alpha^t \\ -(\alpha^{-1})^t M^{-1} \alpha^{-1} & 0 \end{pmatrix}$, and we can assume that $M=J_k$.
 \end{example}
  
Although in the case $\rkg=3$ the pair $(\fso(6,\bbC), \fgl(3,\bbC))$ is the same as $(\fsl(4,\bbC), \fgl(3,\bbC))$, the same classification of involutions still holds, and we get the following list of real forms:
\begin{center}
 \begin{table}[h]
\begin{tabular}{|r|c|c|}
\hline
& Real form of $\fgl(3,\bbC) \subset \fso(6,\bbC)$ & Real form of $\fgl(3,\bbC) \subset \fsl(4,\bbC)$ \\ \hline\hline
(a.1) & $\fu(3)\subset \fso(6)$ & $\fu(3)\subset \fsu(4)$ \\
(a.2) & $\fu(1,2)\subset \fso(2,4)$ & $\fu(1,2)\subset \fsu(2,2)$ \\
(c.1) & $\fu(3)\subset \fu^*(3,\bbH)$ & $\fu(3)\subset \fsu(1,3)$ \\
(c.2) & $\fu(1,2)\subset \fu^*(3,\bbH)$ & $\fu(1,2)\subset \fsu(1,3)$ \\
(d) & $\fgl(3,\bbR)\subset \fso(3,3)$ & $\fgl(3,\bbR) \subset \fsl(4,\bbR)$ \\
\hline
\end{tabular}
\caption{Real forms of $\fgl(3,\bbC) \subset \fso(6,\bbC)$}
\end{table}
\end{center}
 Here we use the following isomorphisms between real forms of $\fso(6,\bbC)$ and $\fsl(4,\bbC)$:
 \begin{align*}
 & \fso(6) \cong \fsu(4),\quad 
 \fso(1,5) \cong \fsl(2,\bbH), \quad
 \fso(2,4) \cong \fsu(2,2), \\
 & \fso(3,3) \cong \fsl(4,\bbR), \quad
 \fu^*(3,\bbH) \cong \fsu(1,3).
 \end{align*}

The above classification also stays the same for $\rkg=2$. Note that in this case the pair $(\fso(4,\bbC), \fgl(2,\bbC))$ is the same as $(\fsl(2,\bbC)\times \fsl(2,\bbC), \fsl(2,\bbC)\times \fso(2,\bbC))$. We get the following 6 real forms in this case:
 \begin{center}
 \begin{table}[h]
 \begin{tabular}{|r|c|c|}
 \hline 
 & \begin{tabular}{c}Real form of \\ $\fgl(2,\bbC) \subset \fso(4,\bbC)$ \end{tabular} & \begin{tabular}{c} Real form of \\ $\fsl(2,\bbC) \times \fso(2,\bbC) \subset \fsl(2,\bbC) \times \fsl(2,\bbC)$ \end{tabular}\\ \hline\hline
 (a.1)  & $\fu(2)\subset \fso(4)$ & $\fsu(2)\times \fso(2) \subset \fsu(2)\times\fsu(2)$ \\
 (a.2)  & $\fu(1,1)\subset \fso(2,2)$ & $\fsl(2,\bbR)\times \fso(2) \subset \fsl(2,\bbR)\times\fsl(2,\bbR)$ \\
 (b)     & $\fgl(1,\bbH)\subset \fu^*(2,\bbH)$ & $\fsu(2)\times \fso(1,1) \subset \fsu(2)\times\fsl(2,\bbR)$ \\ 
 (c.1)  & $\fu(2)\subset \fu^*(2,\bbH)$ & $\fsu(2)\times \fso(2) \subset \fsu(2)\times\fsl(2,\bbR)$ \\
 (c.2)  & $\fu(1,1)\subset \fu^*(2,\bbH)$ & $\fsl(2,\bbR)\times \fso(2) \subset \fsl(2,\bbR)\times\fsu(2)$ \\
 (d)    & $\fgl(2,\bbR)\subset \fso(2,2)$ & $\fsl(2,\bbR)\times \fso(1,1) \subset \fsl(2,\bbR)\times\fsl(2,\bbR)$ \\
 \hline
 \end{tabular}
 \caption{Real forms of $\fgl(2,\bbC) \subset \fso(4,\bbC)$}
 \end{table}
 \end{center}

Finally, when $\rkg=4$, the subalgebra $\fgl(4,\bbC) \subset \fso(8,\bbC)$ is conjugate to $\fso(6,\bbC) \times \fso(2,\bbC)$ by an outer automorphism of $D_4$.  More explicitly, given an isotropic decomposition $\bbC^8 = V\oplus V^*$, consider the (8-dimensional) positive spin representation $\rho : \fso(8,\bbC) \to \fgl(\bbS_+)$ (see~\cite{Harvey1990}).  As vector spaces,
 \begin{equation}\label{spinRep}
 \bbS_+=\bigwedge\nolimits^0 V \op\bigwedge\nolimits^2 V \op\bigwedge\nolimits^4 V.
 \end{equation} 
 The wedge product defines an invariant symmetric bilinear form on $\bbS_+$, so $\rho$ has image in $\fso(\bbS_+) \cong \fso(8,\bbC)$. The subalgebra $\fgl(4,\bbC)\subset \fso(8,\bbC)$ can be defined as a set of elements in $\fso(8,\bbC)$ preserving the above isotropic decomposition. In particular, $\rho|_{\fgl(4,\bbC)}$ decomposes $\bbS_+$ into three invariant subspaces of dimensions 1, 6, and 1. Thus, $\rho|_{\fgl(4,\bbC)}$ preserves the decomposition~\eqref{spinRep} and coincides with $\fso(6,\bbC) \times \fso(2,\bbC)$ \cite[Chp.\ 14]{Harvey1990}. 

We can enumerate all real forms of $\fgl(4,\bbC)\subset \fso(8,\bbC)$ as follows. First, we can construct different real forms using the above list of involutions for generic $\rkg$. Next, we can also construct different real forms of  $\fso(6,\bbC) \times \fso(2,\bbC)$. Then we have to find out when these forms are inequivalent to each other. In fact, it turns out~\cite{Kom1994} that different pairs we obtain in this manner are equivalent if and only if the Lie algebras and the subalgebras in these pairs are isomorphic as abstract Lie algebras. As shown in \cite{Kom1994}, we get the following 12 different real forms:
\begin{table}[h]
 \[
 \begin{array}{|c|c|} \hline
 \mbox{Real forms of $\fso(8,\bbC)$} & \mbox{Real forms of $\fgl(4,\bbC)$ in $\fso(8,\bbC)$}\\ \hline\hline
 \fso(8) & \fu(4) \cong \fso(6) \times \fso(2)\\ \hline
 \fso(1,7) &  \fso(6)\times\fso(1,1) \\
 	& \fso(1,5)\times\fso(2) \\ \hline
 \fso(2,6) \cong \fu^*(4,\bbH) & \fu(1,3) \cong \fu^*(3,\bbH)\\
 	& \fgl(2,\bbH) \cong \fso(1,5) \times \fso(1,1)\\
 	& \fu(4) \cong \fso(6) \times \fso(2) \\
 	& \fu(2,2) \cong \fso(2,4) \times\fso(2) \\ \hline
 \fso(3,5) & \fso(2,4)\times\fso(1,1)\\
	& \fso(1,5)\times\fso(2)\\
     & \fso(3,3)\times\fso(2)\\ \hline
 \fso(4,4) & \fu(2,2) \cong \fso(2,4) \times \fso(2) \\
  & \fgl(4,\bbR) \cong \fso(3,3)\times\fso(1,1)\\ \hline
 \end{array}
 \]
 \caption{Real forms of $\fgl(4,\bbC) \subset \fso(8,\bbC)$}
 \end{table}

Consequently, we immediately obtain the following:
\begin{prop}
 For $\rkg \geq 2$, $\fgl(\rkg,\bbC) \subset \fso(2\rkg,\bbC)$ admits (Lorentzian) real forms $\fk \subset \fso(1,2\rkg-1)$ only when $\rkg = 4$.
\end{prop}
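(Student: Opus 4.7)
The proof plan is essentially a case-by-case inspection of the classification of real forms just established in this section, so the main work has already been done; what remains is to read off which real forms land inside $\fso(1,2\rkg-1)$.

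First, I would dispose of the generic case $\rkg \geq 5$. The four families $(a), (b), (c), (d)$ of involutions preserving $\fgl(\rkg,\bbC) \subset \fso(2\rkg,\bbC)$ produce real forms living respectively in $\fso(2p,2q)$, $\fu^*(\rkg,\bbH)$, $\fso(\rkg,\rkg)$, and $\fu^*(2k,\bbH)$. The first has both signature components even, the third is split, and the quaternionic forms $\fu^*(m,\bbH) = \fso^*(2m)$ are never isomorphic to any $\fso(1,N)$. Thus none of these real forms lies inside $\fso(1,2\rkg-1)$.

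For the low-rank exceptional cases I would consult the tables already constructed. When $\rkg=2$, the enumerated real forms of $\fso(4,\bbC)$ appearing in the classification of real forms of $\fgl(2,\bbC)$ are $\fso(4)$, $\fso(2,2)$, and $\fu^*(2,\bbH)$; the fourth real form $\fso(1,3) \cong \fsl(2,\bbC)_\bbR$ is conspicuously absent. Similarly for $\rkg=3$, the table lists real forms sitting inside $\fso(6)$, $\fso(2,4)$, $\fu^*(3,\bbH)$, and $\fso(3,3)$, but not inside $\fso(1,5)$. So no Lorentzian real form exists for $\rkg=2,3$.

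The case $\rkg=4$ is the interesting one: here the triality-induced identification $\fgl(4,\bbC) \cong \fso(6,\bbC) \times \fso(2,\bbC)$ opens up additional real forms coming from the product structure, and the enumeration in the table for $\rkg=4$ displays two real forms of $\fgl(4,\bbC)$ inside $\fso(1,7)$, namely $\fso(6) \times \fso(1,1)$ and $\fso(1,5) \times \fso(2)$. This completes the verification. The only ``obstacle'' worth noting is making sure that the triality enumeration for $\rkg=4$ is genuinely exhaustive, which is why we appealed to \cite{Kom1994} above; once that list is taken as given, the proposition is immediate. I would simply record these three observations as the three items of a short proof.
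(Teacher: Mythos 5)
Your proposal is correct and follows essentially the same route as the paper: the proposition is a direct read-off from the classification of real forms of $(\fso(2\rkg,\bbC),\fgl(\rkg,\bbC))$ established in this section, checking that the ambient real forms for $\rkg\geq 5$ and the tabulated cases $\rkg=2,3$ are never Lorentzian, while the $\rkg=4$ table (exhaustive by \cite{Kom1994}, via triality) contains forms inside $\fso(1,7)$. Your extra remarks on signature parity and on $\fu^*(m,\bbH)\not\cong\fso(1,N)$ are consistent with the isomorphisms recorded in the paper and just make explicit what the paper leaves implicit.
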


\appendix

\section{Dimensions of irreducible representations}
\label{A:irreps}

 The data in Table \ref{F:irreps} is derived from \cite[Table 5]{OV1990}.
 
\renewcommand\arraystretch{1.2}
 \begin{table}[h]
 $\begin{array}{|l|l|c|ll|} \hline
 \multicolumn{1}{|c|}{\ff} & \mbox{Range} & \dim(\ff) & \multicolumn{2}{c|}{\mbox{Dimensions of fundamental irreps}}\\ \hline\hline
 A_\rkg = \fsl(\rkg+1,\bbC) & \rkg \geq 1 & \rkg(\rkg+2) & \dim(\bbV_{\lambda_k}) = \binom{\rkg+1}{k}, & 1 \leq k \leq \rkg\\
 B_\rkg = \fso(2\rkg+1,\bbC) & \rkg \geq 2 & \binom{2\rkg+1}{2} & \dim(\bbV_{\lambda_k}) = \binom{2\rkg+1}{k}, & 1 \leq k \leq \rkg-1\\
 &&& \dim(\bbV_{\lambda_\rkg}) = 2^\rkg &\\
 C_\rkg = \fsp(2\rkg,\bbC) & \rkg \geq 3 & \binom{2\rkg+1}{2} & \dim(\bbV_{\lambda_k}) = \frac{2\rkg-2k+2}{2\rkg-k+2} \binom{2\rkg+1}{k}, & 1 \leq k \leq \rkg\\
 D_\rkg = \fso(2\rkg,\bbC) & \rkg \geq 4 & \binom{2\rkg}{2} & \dim(\bbV_{\lambda_k}) = \binom{2\rkg}{k}, & 1 \leq k \leq \rkg-2\\
  &&& \dim(\bbV_{\lambda_k}) = 2^{\rkg-1}, & k = \rkg-1, \rkg\\ \hline
 \end{array}$
 \caption{Dimensions of fundamental irreps of classical complex simple Lie algebras}
 \label{F:irreps}
 \end{table}

 \begin{prop} \label{P:orth-irrep}
 The only irreps $\psi : \ff \to \fgl(\bbV_\lambda)$ of a classical simple Lie algebra $\ff$ of type $B,C,D$ having $\dim(\bbV_\lambda) < \dim(\ff)$ are those with the following highest weights $\lambda$:
 \[
 \begin{array}{|c|c|c|c|} \hline
 \ff & \mbox{Range} & \mbox{General} & \mbox{Low dimensional exceptions}\\ \hline\hline
 B_\rkg & \rkg \geq 2 & \lambda_1 & \lambda_\rkg \quad (2 \leq \rkg \leq 6)\\
 C_\rkg & \rkg \geq 3 & \lambda_1, \lambda_2 & \lambda_3 \quad (\rkg = 3)\\
 D_\rkg & \rkg \geq 4 & \lambda_1 & \lambda_{\rkg-1}, \lambda_\rkg \quad (4 \leq \rkg \leq 7)\\ \hline
 \end{array}
 \]
 The self-dual irreps $\psi : A_\rkg \to \fgl(\bbV_\lambda)$ with $\dim(\bbV_\lambda) < \dim(A_\rkg)$ have highest weight $\lambda_r$ with $r = \frac{\rkg+1}{2}$ when $\rkg = 1, 3$ or $5$.
 
 Among {\em all} simple Lie algebras $\ff$, the orthogonal irreps $\psi : \ff \to \fso(\bbV_\lambda)$ satisfying $\dim(\bbV_\lambda) < \dim(\ff) < \dim(\fso(\bbV_\lambda))$ are:
 \[
 (B_\rkg,\lambda_\rkg) \mbox{ for } \rkg = 3,4; \qquad (C_\rkg, \lambda_2) \mbox{ for } \rkg \geq 3; \qquad
 (G_2, \lambda_1); \qquad (F_4,\lambda_4).
 \]
 \end{prop}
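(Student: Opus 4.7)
The strategy is to reduce to a finite case analysis by combining the explicit dimension formulas of Table \ref{F:irreps} with the monotonicity of the Weyl dimension formula. First, for fundamental weights I would read $\dim V_{\lambda_k}$ directly from Table \ref{F:irreps} and compare against $\dim\ff$. For $B_\rkg$ one finds $\binom{2\rkg+1}{k} = \dim B_\rkg$ at $k=2$ and exceeds it for $3\le k\le \rkg-1$, while the spin weight $\lambda_\rkg$ has dimension $2^\rkg$, which is strictly less than $\rkg(2\rkg+1)$ precisely when $\rkg\le 6$. Parallel computations handle $C_\rkg$ (where $\lambda_2$ always qualifies via the formula $(\rkg-1)(2\rkg+1)$, and $\lambda_3$ only at $\rkg=3$) and $D_\rkg$ (where the half-spin weights $\lambda_{\rkg-1},\lambda_\rkg$ of dimension $2^{\rkg-1}$ qualify for $4\le\rkg\le 7$), producing the tabulated lists.

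Next, to eliminate non-fundamental dominant weights $\lambda=\sum_i r_i\lambda_i$, I would invoke the observation that the Weyl dimension formula is a polynomial in the $r_i$ with strictly positive coefficients, so $\dim V_\lambda$ is strictly increasing in every $r_i$. Any non-fundamental $\lambda$ therefore dominates componentwise either some $2\lambda_i$ or some $\lambda_i+\lambda_j$ with $i\ne j$. For $B_\rkg$ and $D_\rkg$ with $\rkg\ge 3$ the adjoint representation has highest weight $\lambda_2$, so $r_2\ge 1$ forces $\dim V_\lambda\ge\dim\ff$; for $C_\rkg$ the adjoint is $V_{2\lambda_1}$, so $r_1\ge 2$ forces the same bound. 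The remaining candidates (squares or pairwise sums of the few fundamental weights that are themselves small) form a short finite list, each eliminated by a direct Weyl-dimension computation, for instance $\dim V_{2\lambda_1}=(2\rkg+1)(\rkg+1)-1>\dim B_\rkg$ and analogous bounds for $V_{2\lambda_\rkg}$ and $V_{\lambda_1+\lambda_\rkg}$ in low-rank spin cases. The self-dual constraint for $A_\rkg$ imposes $r_i=r_{\rkg+1-i}$: for $\rkg$ even the minimal nontrivial self-dual weight is the adjoint $\lambda_1+\lambda_\rkg$, of dimension exactly $\dim A_\rkg$, while for $\rkg$ odd it is $\lambda_{(\rkg+1)/2}$ of dimension $\binom{\rkg+1}{(\rkg+1)/2}$, which is strictly less than $\rkg(\rkg+2)$ only for $\rkg\in\{1,3,5\}$; monotonicity excludes all larger self-dual weights.

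Finally, for the orthogonality classification in the middle range, I collect the candidates from the previous steps together with the short list of small irreps of the exceptional Lie algebras $E_6,E_7,E_8,F_4,G_2$ taken from \cite{OV1990}, and test each against the parity criterion of Table \ref{F:self-dual}. The standard weights $\lambda_1$ of $B_\rkg$ and $D_\rkg$ yield $\dim\ff=\dim\fso(V_{\lambda_1})$ and are thereby excluded; $\lambda_1$ of $C_\rkg$ is symplectic; the spin weight $\lambda_\rkg$ of $B_\rkg$ is orthogonal exactly when $\rkg\not\equiv 1,2\pmod 4$, selecting $\rkg=3,4$; the half-spin weights of $D_\rkg$ are either not self-dual (for $\rkg$ odd) or symplectic (checked via parity $r_{\rkg-1}+r_\rkg$ for $\rkg\equiv 2\pmod 4$, covering $\rkg=6$); $\lambda_2$ of $C_\rkg$ has even parity $r_1=0$ and thus is orthogonal for every $\rkg\ge 3$, comfortably fitting inside the middle range; and only $(G_2,\lambda_1)$ and $(F_4,\lambda_4)$ survive among the exceptionals. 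The main obstacle is the careful bookkeeping of the low-rank exceptional isomorphisms $B_2\cong C_2$, $D_3\cong A_3$, and $D_4$-triality, where the same small orthogonal irrep may appear under several labels; each such coincidence must be matched against the parity table individually to avoid double counting.
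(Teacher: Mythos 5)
Your overall strategy is the one the paper follows (read off fundamental-weight dimensions from Table \ref{F:irreps}, use monotonicity of the Weyl dimension formula to reduce to a short list of non-fundamental weights, then sort the survivors by the orthogonal/symplectic parity criterion), and your treatment of types $B$, $C$, $D$ and of the exceptional algebras is sound. However, the $A_\rkg$ step has a genuine gap. For $\rkg$ even, $\lambda_1+\lambda_\rkg$ is \emph{not} the minimal nontrivial self-dual weight in the componentwise order: the weights $\lambda_k+\lambda_{\rkg+1-k}$ for $2\le k\le \lfloor\frac{\rkg+1}{2}\rfloor$ are incomparable with the adjoint weight, and for $\rkg$ odd the weights $\lambda_k+\lambda_{\rkg+1-k}$ and $2\lambda_{(\rkg+1)/2}$ do not dominate $\lambda_{(\rkg+1)/2}$ componentwise. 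So ``monotonicity excludes all larger self-dual weights'' does not dispose of, e.g., $2\lambda_2$ for $A_3$, or $\lambda_2+\lambda_4$ and $2\lambda_3$ for $A_5$; and in the odd case monotonicity based at the middle fundamental weight is useless anyway, since that weight itself has dimension below $\dim(A_\rkg)$ precisely in the cases of interest. These weights must be excluded by an explicit computation, which is what the paper does via
\[
\dim\bigl(\bbV_{\lambda_{\rkg+1-k}+\lambda_k}\bigr)=\frac{\rkg+2-2k}{\rkg+2}\binom{\rkg+2}{k}^2 ,
\]
checking that this is never below $\dim(A_\rkg)$ (note that $k=\frac{\rkg+1}{2}$ covers $2\lambda_{(\rkg+1)/2}$). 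Without this, the second assertion of the proposition is not established.

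A second, smaller omission occurs in the final orthogonality sift: your case split for the half-spin weights of $D_\rkg$ (``not self-dual for $\rkg$ odd, symplectic for $\rkg\equiv 2 \pmod 4$'') leaves out $\rkg=4$. The representations $(D_4,\lambda_3)$ and $(D_4,\lambda_4)$ are orthogonal and satisfy $\dim(\bbV_\lambda)=8<28=\dim(D_4)$, so they pass your parity test; they are excluded only because triality makes the image all of $\fso(8,\bbC)$, i.e. $\dim(\ff)=\dim(\fso(\bbV_\lambda))$, violating the strict right-hand inequality --- the same mechanism that removes $(B_\rkg,\lambda_1)$, $(D_\rkg,\lambda_1)$ and $(A_3,\lambda_2)$. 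You allude to ``triality bookkeeping'' but this exclusion needs to be carried out explicitly; once it and the $A_\rkg$ computation above are added, your argument coincides with the paper's proof.
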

 
 \begin{proof}  Given two dominant integral weights $\lambda, \mu$ with $\lambda \geq \mu$, we have $\dim(\bbV_\lambda) \geq \dim(\bbV_\mu)$ by the Weyl dimension formula.  Consequently, it suffices to: (i) identify the fundamental weights $\lambda_k$ satisfying $\dim(\bbV_{\lambda_k}) < \dim(\ff)$ using Table \ref{F:irreps}; and (ii) examine those $\lambda$ which are integral linear combinations of the fundamental weights in (i).  All possibilities below give dimensions $\geq \dim(\ff)$:
 \begin{itemize}
 \item $B_\rkg$: $\dim(\bbV_{2\lambda_1}) = \dim(\bigodot^2_0(\bbV_{\lambda_1})) = \binom{2\rkg+2}{2} - 1$, and $\dim(\bbV_{2\lambda_\rkg}) = \dim(\bigwedge^\rkg V_{\lambda_1}) = \binom{2\rkg+1}{\rkg}$.  Since $\bbV_{\lambda_1} \otimes \bbV_{\lambda_{\rkg}} \cong \bbV_{\lambda_1 + \lambda_{\rkg}} \oplus \bbV_{\lambda_\rkg}$, then $\dim(\bbV_{\lambda_1 + \lambda_\rkg}) = 2^{\rkg+1} \rkg$.
 \item $C_\rkg$: Let $r = 2\rkg$.  Then $\dim(\bbV_{2\lambda_1}) = \dim(C_\rkg) = \binom{r+1}{2}$.  From \cite{OV1990}, 
 \[
 \dim(\bbV_{2\lambda_2}) = \frac{r + 3}{3(r-1)} \binom{r}{2} \binom{r-1}{2}, \qquad \dim(\bbV_{\lambda_1 + \lambda_2}) = \frac{r(r-2)(r+2)}{3}.
 \]
 When $\rkg = 3$, $\dim(\bbV_{2\lambda_3}) = 84$, $\dim(\bbV_{\lambda_1 + \lambda_3}) = 70$, and $\dim(\bbV_{\lambda_2 + \lambda_3}) = 126$.
 \item $D_\rkg$: $\dim(\bbV_{2\lambda_1}) = \dim(\bigodot^2_0(\bbV_{\lambda_1})) = \binom{2\rkg+1}{2} - 1$, and $\dim( \bbV_{\lambda_{\rkg-1} + \lambda_\rkg} ) = \dim( \bigwedge^{\rkg-1} \bbV_{\lambda_1}) = \binom{2\rkg}{\rkg-1}$.
 From \cite{OV1990}, $\dim(\bbV_{2\lambda_{\rkg-1}}) = \dim(\bbV_{2\lambda_\rkg}) = \binom{2\rkg-1}{\rkg-1}$.  Since $\bbV_{\lambda_1} \otimes \bbV_{\lambda_\rkg} \cong \bbV_{\lambda_1 + \lambda_\rkg} \oplus \bbV_{\lambda_{\rkg-1}}$, then $\dim(\bbV_{\lambda_1 + \lambda_\rkg} ) = (2\rkg-1) 2^{\rkg-1}$.  Similarly, $\dim(\bbV_{\lambda_1 + \lambda_{\rkg-1}} ) = (2\rkg-1) 2^{\rkg-1}$.
 \end{itemize}
 
 For $\rkg \geq 2$, an irrep $\psi : A_\rkg \to \fgl(\bbV_\lambda)$ is self-dual iff $\lambda$ is invariant under the duality involution (rotate the Dynkin diagram by 180 degrees).  Equivalently, $\lambda$ is an integral linear combination of 
 \[
 \lambda_1 + \lambda_\rkg, \quad
 \lambda_2 + \lambda_{\rkg-1}, \quad ..., \quad
 \left\{ 
 \begin{array}{lll} \lambda_{r} + \lambda_{r+1}, & r= \frac{\rkg}{2}, & \rkg \mbox{ even};\\
 \lambda_r, & r= \frac{\rkg+1}{2}, & \rkg \mbox{ odd}.
 \end{array} \right.
 \]
 From \cite[Table 5]{OV1990}, we have for $k \leq \left\lfloor \frac{\rkg+1}{2} \right\rfloor$, 
 \[
 \dim(\bbV_{\lambda_{\rkg+1 - k} + \lambda_k}) = \frac{\rkg+2-2k}{\rkg+2-k}\binom{\rkg+1}{\rkg+1-k} \binom{\rkg+2}{k} = \frac{\rkg+2-2k}{\rkg+2} \binom{\rkg+2}{k}^2.
 \]
 We deduce that $\dim(\bbV_{\lambda_{\rkg+1 - k} + \lambda_k}) < \dim(A_\rkg)$ is never true.  On the other hand, for $\rkg$ odd and $r = \frac{\rkg+1}{2}$, we have $\dim(\bbV_r) = \binom{\rkg+1}{r}$.  This is less than $\dim(A_\rkg)$ only when $\rkg = 3$ or $5$.  For $\rkg = 1$, the standard representation is the unique irrep with dimension less than $\dim(A_1)$ (and self-dual).
 
 All irreps in type $B,C$ are self-dual, as is $(D_\rkg, \lambda_1)$.  The duality involution for $D_\rkg$ is non-trivial when $\rkg$ is odd, so among $(D_\rkg,\lambda_{\rkg-1})$, $(D_\rkg,\lambda_\rkg)$ for $4 \leq \rkg \leq 7$, the self-dual ones occur when $\rkg = 4$ or $6$.
 
 Using \cite[Corollary on page 98]{OV1994}, we can immediately identify which of these self-dual irreps are orthogonal and satisfy the given dimension inequality:
 \begin{itemize}
 \item $\ff \ra \fso(\bbV_\lambda)$ is an isomorphism for $(A_3, \lambda_2)$, $(B_\rkg, \lambda_1)$, $(D_\rkg, \lambda_1)$, $(D_4, \lambda_3)$, $(D_4, \lambda_4)$.
 \item Symplectic irreps: $(A_5,\lambda_3)$, $(B_\rkg, \lambda_\rkg)$ for $\rkg=2,5,6$, $(C_\rkg,\lambda_1)$, $(C_3,\lambda_3)$, $(D_6,\lambda_5)$, $(D_6,\lambda_6)$
 \item Not self-dual: $(D_\rkg,\lambda_{\rkg-1})$, $(D_\rkg,\lambda_\rkg)$ for $\rkg=5,7$.
 \end{itemize}
 Also, for $(A_1,\lambda_1)$, we have $2 = \dim(\bbV_{\lambda_1}) = \dim(\fso(\bbV_{\lambda_1}))$ and $\dim(A_1) = 3$.  The remaining irreps are orthogonal and satisfy the given inequality.
 
 The only irreps of exceptional simple Lie algebras $\ff$ satisfying $\dim(\ff) < \dim(\fso(\bbV_\lambda))$ are $(G_2,\lambda_1)$, $(F_4,\lambda_4)$, $(E_6, \lambda_1) \cong (E_6, \lambda_6)$, and $(E_7,\lambda_7)$.  While the first two are orthogonal and satisfy the inequality, $(E_6,\lambda_1)$ is not self-dual, and $(E_7,\lambda_7)$ is symplectic.
 \end{proof}

\section{Branching rules}
\label{A:branch}

 Letting a ``0'' subscript below indicate the totally trace-free part, we recall that
 \begin{align}
 \bigodot\nolimits^2( \bigwedge\nolimits^2 \bbC^n) &\cong \yng(2,2)(\bbC^n) \op \bigwedge\nolimits^4(\bbC^n) \label{E:branch1}\\
 &= \underbrace{\yngtf{2,2}(\bbC^n)}_{\Weyl_\bbC} \op \bigodot\nolimits^2(\bbC^n) \op \bigwedge\nolimits^4(\bbC^n), \nonumber\\
 \bigwedge\nolimits^2 (\bbC^n) \ot \bbC^n &\cong \bigwedge\nolimits^3 (\bbC^n) \op \yng(2,1)(\bbC^n) = \bigwedge\nolimits^3 (\bbC^n) \op \yngtf{2,1}(\bbC^n) \op \bbC^n. \label{E:branch2}
 \end{align}
 We will branch $\Weyl_\bbC$ from $\fso(n,\bbC)$ to several choices of subalgebras $\fk$.  
 \begin{enumerate}
 \item[(a)] \framebox{$\fk = \fso(n-1,\bbC)$}: Writing $\bbC^n = \bbC^{n-1} \op \bbC$ and then using \eqref{E:branch1}, \eqref{E:branch2},  
 \begin{align}
 \bigodot\nolimits^2 (\bigwedge\nolimits^2 \bbC^n) &= \bigodot\nolimits^2( \bigwedge\nolimits^2 \bbC^{n-1} ) \op \bigwedge\nolimits^2 \bbC^{n-1} \ot \bbC^{n-1}\op \bigodot\nolimits^2 (\bbC^{n-1} );\\
 \Weyl_\bbC &= \yngtf{2,2} (\bbC^{n-1}) \op \yngtf{2,1} (\bbC^{n-1}) \op \yngtf{2} (\bbC^{n-1}), \quad n \geq 4.\label{E:Weyl-branch}
 \end{align}
  For $n \geq 6$, each factor above is irreducible for $\fso(n-1,\bbC)$.  For $n=5$, $\yngtf{2,2}(\bbC^4)$ splits into self-dual and anti-self-dual irreducible components (each of dimension 5), while
  \begin{align} \label{E:21-decompose}
  \yngtf{2,1} (\bbC^4) = \l( \bigodot\nolimits^3 \bbS_+ \boxtimes \bbS_-\r) \op \l( \bbS_+ \boxtimes \bigodot\nolimits^3 \bbS_-\r),
  \end{align}
   where $\bbC^4 = \bbS_+ \boxtimes \bbS_-$ in terms of the half-spin representations $\bbS_+ \cong \bbS_- \cong \bbC^2$ for $\fso(4,\bbC)$.  For $n=4$, $\yngtf{2,2} (\bbC^3) \cong \yngtf{2} (\bbC^3)$, and $\yngtf{2,1} (\bbC^3)  = 0$.
 Thus, no trivial factors arise.
 \item[(b)] \framebox{$\fk = \fso(n-2,\bbC)$}: Let $n \geq 5$.  By part (a), $\yngtf{2,2} (\bbC^{n-1})$ produces no trivial factor.
 \begin{itemize}
 \item[(i)] $\yngtf{2} (\bbC^{n-1}) = \bbC \op \bbC^{n-2} \op \yngtf{2} (\bbC^{n-2})$;
 \item[(ii)] $\yngtf{2,1} (\bbC^{n-1}) = \bbC^{n-2} \op \bigwedge^2 \bbC^{n-2} \op \yngtf{2,1} (\bbC^{n-2}) \op \yngtf{2} (\bbC^{n-2})$.
 \end{itemize}
 If $n \geq 7$, all factors in (i) and (ii) are irreducible, as is $\yngtf{2} (\bbC^{n-2})$ for $n=5,6$.  For $n=6$, we have \eqref{E:21-decompose} and $\bigwedge^2 \bbC^4 = \bigwedge^2_+ \bbC^4 \op \bigwedge^2_- \bbC^4$. 
 If $n=5$, we have $\bigwedge^2 \bbC^3 \cong \bbC^3$, and letting $\bbS \cong \bbC^2$ denote the spin representation of $\fso(3,\bbC)$, we have $\yngtf{2,1} (\bbC^3) = \bbC^3 \op \yngtf{2} (\bbC^3) \cong \bigodot\nolimits^2 \bbS \op \bigodot\nolimits^4 \bbS$.
 Thus, for $n \geq 5$, a single trivial factor arises.
 \item[(c)] \framebox{$\fk = \fgl(\rkg,\bbC) \subset \fso(2\rkg,\bbC)$}: Given an isotropic decomposition $\bbC^{2\rkg} = V \op V^*$ (with $V \cong \bbC^\rkg$),
 \begin{itemize}
 \item $\rkg=2$: We have $\Weyl_\bbC = \Weyl_\bbC^+ \op \Weyl_\bbC^- \cong \left( \bigodot^4 \bbS_+ \boxtimes \bbC \right) \op \left( \bbC \boxtimes \bigodot^4 \bbS_- \right)$ as a representation of $\fso(4,\bbC) \cong \fso(3,\bbC)_L \times \fso(3,\bbC)_R$.  While $\Weyl_\bbC^+$ remains irreducible for $\fgl(2,\bbC)$, $\Weyl_\bbC^-$ decomposes into five 1-dimensional weight spaces (of weights $-4,-2,0,2,4$).
 \item $\rkg\geq 3$: We have
 \begin{align*}
 \bigodot\nolimits^2\l( \bigwedge\nolimits^2 (\bbC^{2\rkg})\r) &\cong 
  \bigodot\nolimits^2\l( \bigwedge\nolimits^2 V \op (V \ot V^*) \op \bigwedge\nolimits^2 V^*\r)\\
 &\cong \bigodot\nolimits^2( \bigwedge\nolimits^2 V ) \op \bigodot\nolimits^2 (V \ot V^*) \op \bigodot\nolimits^2 ( \bigwedge\nolimits^2 V^* )\\
 &\qquad \op \l( \bigwedge\nolimits^2 V \ot V \ot V^* \r) \op \l( V \ot V^* \ot \bigwedge\nolimits^2 V^* \r) \\
 &\qquad \op \l( \bigwedge\nolimits^2 V \ot \bigwedge\nolimits^2 V^* \r).
 \end{align*}
 Since $\bigwedge\nolimits^4 (\bbC^{2\rkg}) \cong \bop_{i=0}^4 (\bigwedge\nolimits^i V \ot \bigwedge\nolimits^{4-i} V^*)$, then using \eqref{E:branch1} and \eqref{E:branch2},
 \begin{align*}
 \yng(2,2)(\bbC^{2\rkg}) &\cong \yng(2,2)(V) \op \yng(2,1)(V) \ot V^* \op \bigodot\nolimits^2 (V\ot V^*) \\
 &\qquad \op \yng(2,1)(V^*) \ot V \op \yng(2,2)(V^*) .
 \end{align*}
 Also,
 \begin{align*}
 \bigodot\nolimits^2 (V \ot V^*) &\cong \bigodot\nolimits^2 (\fsl(V) \op \bbC) \cong \bigodot\nolimits^2 (\fsl(V)) \op \fsl(V) \op \bbC \\
 &\cong \l( \bigodot\nolimits^2 V \ot \bigodot\nolimits^2 V^* \r)_0 \op \l( \bigwedge\nolimits^2 V \ot \bigwedge\nolimits^2 V^* \r)_0 \\
 &\qquad \op \fsl(V) \op \bbC \op \fsl(V) \op \bbC,\\
 \yng(2,1)( V ) \ot V^* &\cong \l(  \yng(2,1)( V ) \ot V^* \r)_0 \op \bigodot\nolimits^2 V \op \bigwedge\nolimits^2 V.
 \end{align*}
 By \eqref{E:branch1}, $\Weyl_\bbC\cong \yngtf{2,2}(\bbC^{2\rkg})$ decomposes into $\fgl(\rkg,\bbC)$-irreps as
 \begin{align*}
 \Weyl_\bbC  &\cong \yng(2,2)(V) \op \yng(2,2)(V^*) \op \l(  \yng(2,1)(V) \ot V^* \r)_0 \op \l(  \yng(2,1)(V^*) \ot V \r)_0 \\
 &\qquad
 \op \l( \bigodot\nolimits^2 V \ot \bigodot\nolimits^2 V^* \r)_0 \op \l( \bigwedge\nolimits^2 V \ot \bigwedge\nolimits^2 V^* \r)_0 \\
 &\qquad \op \fsl(V) \op \bigwedge\nolimits^2 V \op \bigwedge\nolimits^2 V^* \op \bbC.
 \end{align*}
 \end{itemize}
 \end{enumerate}
 


 \bibliographystyle{amsplain}

\end{document}